\newtheorem{theorem}{Theorem}[section]
\newproof{proof}[theorem]{Proof}
\newtheorem{proposition}[theorem]{Proposition}
\newtheorem{corollary}[theorem]{Corollary}
\newtheorem{example}[theorem]{Example}
\newtheorem{remark}[theorem]{Remark}
\numberwithin{equation}{section}
\begin{document}

\begin{frontmatter}

\title{Berger measure for some transformations \\
of subnormal weighted shifts}
\author{Ra\' ul E. Curto\footnote{The first named author was partially supported by NSF Grants DMS-0801168 and DMS-1302666.}}
\address{Department of Mathematics, The University of Iowa, Iowa City, Iowa
52242}
\ead{raul-curto@uiowa.edu}

\author{George R. Exner}
\address{Department of Mathematics, Bucknell University, Lewisburg, Pennsylvania 17837}
\ead{exner@bucknell.edu}

\begin{abstract}
A subnormal weighted shift may be transformed to another shift in various ways, such as taking the $p$-th power of each weight or forming the Aluthge transform. \ We determine in a number of cases whether the resulting shift is subnormal, and, if it is, find a concrete representation of the associated Berger measure, directly for finitely atomic measures, and using both Laplace transform and Fourier transform methods for more complicated measures. \ Alternatively, the problem may be viewed in purely measure-theoretic terms as the attempt to solve  moment matching equations such as $(\int t^n \, d\mu(t))^2 = \int t^n \, d\nu(t)$ ($n=0, 1, \ldots$) for one measure given the other.
\end{abstract}

\begin{keyword}
weighted shift, Berger measure, subnormal, Square Root Problem, Aluthge transform, Laplace and Fourier transforms

\medskip

\textit{2010 Mathematics Subject Classification.} \ Primary 47B37, 47B20, 44A60; \ Secondary 30E05, 47A57 

\medskip

\end{keyword}

\end{frontmatter}

\section{Introduction and Preliminaries} \label{intro}

Let $\mathcal{H}$ be a separable complex infinite dimensional Hilbert space and $\mathcal{L}(\mathcal{H})$ be the algebra of bounded linear operators on $\mathcal{H}$. \ A subnormal weighted shift $W$ in $\mathcal{L}(\mathcal{H})$ (definitions reviewed below) is well known to have an associated Berger measure. \ Certain transformations of a subnormal weighted shift, yielding again a weighted shift, were studied in \cite{CPY} and \cite{Ex2} with a view to when the resulting shift is again subnormal. \ In the present paper we consider further these transformations, still with a view to whether the resulting shift is subnormal, but with the additional goal of determining concretely the associated Berger measure if it is. \ Of particular interest is the shift resulting from taking the $p$-th power of each weight in the cases $p=2$ (the Square Problem, in which the resulting shift is known to be subnormal) and $p = 1/2$ (the Square Root Problem, in which it may or may not be).

The Square Root Problem is related to the problem for the Aluthge transform of a shift. \ As noted above, the problem may be regarded instead as, given a measure, an attempt to find another measure satisfying certain moment matching equations. \ A motivating example is the Bergman shift (equivalently, the measure $ 1 \cdot \chi_{[0,1]} \, dt$);  it is known that every shift resulting from the $p$-th power of each weight ($p>0$), and the Aluthge transform (even the iterated Aluthge transforms), are subnormal, but the resulting Berger measures turn out to be somewhat surprising.

The organization of this paper as as follows. \ In the remainder of this section we introduce notation and give some background results. \ In the second section we give some preliminary results and consider the Square and Square Root Problems for finitely atomic measures, and in the third section we study some absolutely continuous measures. \ The fourth section uses Laplace transforms and solves in particular the $p$-th power problem for the Bergman shift, while in the fifth section we employ the Fourier transform. \ Finally, in Section \ref{section6} we conclude with some remarks and open questions.

A Hilbert space operator $T$ is normal if it commutes with its adjoint $T^*$, and is subnormal if it is the restriction of a normal operator to a (closed) invariant subspace. \ There has been considerable recent study of ``weak subnormalities,'' often using weighted shifts as test objects. \ Recall that an operator $T$ is $k$-hyponormal, $k = 1, 2, \ldots$, if $$\left( \begin{array}{ccccc}
    I & T^* & {T^*}^2 & \ldots & {T^*}^k  \\
    T & T^* T& {T^*}^2 T & \ldots & {T^*}^k T \\
    T^2 & T^* T^2& {T^*}^2 T^2 & \ldots & {T^*}^k T^2 \\
     \vdots &  & \vdots  &  & \vdots \\
     T^k & T^* T^k& {T^*}^2 T^k & \ldots & {T^*}^k T^k
                    \end{array} \right) \geq 0.$$
The well-known Bram-Halmos characterization of subnormality states that $T$ is subnormal if and only if it is $k$-hyponormal for all $k = 1, 2, \ldots$ (see
\cite{Br} and \cite{Em}). \ A somewhat different route to subnormality is via $n$-contractivity:  an operator $T$ is $n$-contractive ($n=1, 2, \ldots$) if
$$\sum_{j=0}^{n}(-1)^{j}\left( \begin{array}{c}
    n \\
    j
                    \end{array}\right){T^{\ast }}^{j}T^{j} \geq 0.$$
The Agler-Embry characterization of subnormality (under the harmless condition that the operator is a contraction, $\|T\| \leq 1$) states that $T$ is subnormal if and only if it is $n$-contractive for all $n$. \ (See \cite{Ag}, in which what is actually presented involves $n$-hypercontractivity, which difference is not of importance here.)

Let us recall some now familiar notation for weighted shift operators. \ Consider $\ell^2$ with the standard basis $\{e_j\}_{j=0}^\infty$ (note that indexing begins at zero). \ Given a
 weight sequence $\alpha: \alpha_0,\alpha_1, \alpha_2,
\ldots$ of positive numbers, define the weighted shift $W_\alpha$ on $\ell^2$ by $W_\alpha e_j := \alpha_j e_{j+1}$, extending
by linearity. \ The \underline{moments} of the shift are defined by
$\gamma_0 = 1$ and $\gamma_j =
\prod_{i=0}^{j-1} \alpha_i^2$, $j \geq 1$.
(The reader should note that some authors take the moments to be products of the weights, not of squares.)

Weighted shifts have proved a particularly tractable place to study the $k$-hyponormality and $n$-contractivity conditions (for finite $k$ and $n$) because the general conditions simplify considerably in the case of shifts. \ A weighted shift $W_\alpha$ is $k$-hyponormal if and only if the following  Hankel moment matrices are positive for $m = 0, 1, 2, \ldots$ :
$$\left( \begin{array}{ccccc}
    \gamma_m & \gamma_{m+1} & \gamma_{m+2} & \ldots & \gamma_{m+k}  \\
    \gamma_{m+1} & \gamma_{m+2} & & \ldots & \gamma_{m+k+1} \\
  \gamma_{m+2} & \ldots & & \ldots & \gamma_{m+k+2} \\
     \vdots &  & \vdots  &  & \vdots \\
     \gamma_{m+k} & \gamma_{m+k+1}&  & \ldots & \gamma_{m+2k}
                    \end{array} \right) \geq 0. $$
(Thus, an operator matrix condition is replaced by a scalar matrix condition \cite{Cu1}.) \
A similar (easier) simplification (\cite{Ex1}) is that the weighted shift is $n$-contractive iff $$\sum_{j=0}^{n}(-1)^{j}\binom{n}{j} \gamma_{m+j} \geq 0, \hspace{.2in}m =
0, 1, \ldots . $$

Recall that every subnormal weighted shift $W_\alpha$ has an associated Berger measure, that is, a probability measure $\mu$ supported on $[0, \|W_\alpha\|^2]$ and satisfying
$$\gamma_n = \int_0^{\|W_\alpha\|^2} t^n d \mu(t), \hspace{.2in} n
= 0, 1, \ldots .$$

We now digress briefly to make connection with the measure problem we consider. \ Recall that the Schur product of matrices is the entry-wise product, and define the Schur product of two shifts $W_\alpha$ and $W_\beta$ to be the shift $W_\alpha \circ W_\beta$ with weights $\alpha_n \beta_n$;  it is immediate to see that the moment sequence of the result is the Schur product of the two moment sequences. \ Since the Schur product of two positive matrices is positive (\cite{Pa}), we conclude that if $W_\alpha$ and $W_\beta$ are subnormal so is $W_\alpha \circ W_\beta$, because for any $k$,  each of the matrices involved in testing its $k$-hyponormality is the Schur product of the associated (positive) matrices for $k$-hyponormality of $W_\alpha$ and $W_\beta$. \ In particular, if $W_\alpha = W_\beta$ we have that for the Square Problem the resulting shift is subnormal, but note that no information is provided about its Berger measure even if the Berger measure of $W_\alpha$ is known.

If $W_\alpha$ is a weighted shift with weight sequence $\alpha: \alpha_0,\alpha_1, \alpha_2,
\ldots$, and $p >0$, we define the $p$-th power shift to be the shift with weight sequence $\alpha_0^p,\alpha_1^p, \alpha_2^p,
\ldots$. \ We will refer in particular to the square root shift of $W_\alpha$ ($p = 1/2$, denoted $W_{\sqrt{\alpha}}$) and the ``square'' ($p = 2$, denoted $W_{\alpha^2}$ or occasionally $W_\alpha^{(2)}$). \ Recall also that, for a general operator $T$ with $T \equiv U |T|$ being its polar decomposition (with ker $U=$ ker $T$),
the Aluthge transform is defined by $AT(T) := |T|^{\frac{1}{2}} U |T|^{\frac{1}{2}}$, and the iterated Aluthge transform $AT^n(\cdot)$ is given by
$AT^{n+1}(T) := AT(AT^n(T)) \; (n \ge 1)$. \ It is easy to compute that the Aluthge transform of a weighted shift is again a weighted shift. \ 

Further, there is a general relationship between the square root and Aluthge transform of a shift. \ Denote by $W_{r(\beta)}$ the restriction of $W_\beta$ to the canonical invariant subspace spanned by $\{e_1, e_2, \ldots\}$ (that is, the orthocomplement of the zeroth basis vector). \ One computes easily that $A(W_\alpha) = W_{\sqrt{\alpha}} \circ W_{r(\sqrt{\alpha})}$ and it follows as argued above that if the square root shift is subnormal (and hence certainly its restrictions are), then the Aluthge transform is subnormal as well.

Certain subnormal shifts, due to Agler  and used as models in (\cite{Ag}), have been studied (in particular, certain single weight perturbations have a tidy relationship between $k$-hyponormality and $n$-contractivity). \ The $j$-th Agler shift, $A_j \; (j = 2, 3, \ldots)$, has weight sequence $\alpha^{(j)}:
\sqrt{\frac{1}{j}},\sqrt{\frac{2}{j+1}},\sqrt{\frac{3}{j+2}},
\ldots$, and is known to have Berger measure $d \mu^{(j)}(t) = (j-1) (1-t)^{j-2} dt$. \ (Observe that $A_2$ is simply the familiar Bergman shift.) \ Transformations of these shifts under powers and Aluthge transforms turn out to be subnormal, via an approach based upon completely monotone functions. \ A function $f: \mathbb{R}_+ \rightarrow \mathbb{R}_+ \setminus \{0\}$ is {\em completely monotone} if its derivatives alternate in sign: $f^{(2 j)}$ is non-negative for $j \geq 1$, and $f^{(2 j + 1)}$ is non-positive for $j \geq 0$. \ It is routine to check that if some completely monotone function $f$ interpolates the moments of a shift in the sense that $f(m) = \gamma_m$ for all $m$, then the shift is subnormal (the complete monotonicity condition converts readily to positivity of the iterated forward differences of the moments as required for the $n$-contractivity conditions). \ It is by this route that the following result was obtained.

\begin{theorem} (\cite[Theorem 2.10]{Ex2}) \ 
For $j=2,3,\ldots$, let $A_j$ be the $j$-th Agler shift. \ Then \newline
(i) any $p$-th power transformation ($p > 0$) of $A_j$ is subnormal; and \newline
(ii) any $n$-th iterated Aluthge transform of $A_j$ is subnormal ($n=2,3.\ldots$).
\end{theorem}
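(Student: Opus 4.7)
The plan is to exhibit, for each transformed shift, a completely monotone function interpolating its moment sequence; by the discussion preceding the theorem this forces positivity of the iterated forward differences, i.e., $n$-contractivity for all $n$, and hence subnormality. Compute first the moments of $A_j$:
\begin{equation*}
\gamma_m^{(j)} = \prod_{i=0}^{m-1}\frac{i+1}{j+i} = \frac{\Gamma(m+1)\Gamma(j)}{\Gamma(m+j)},
\end{equation*}
which are the integer values of $f_j(x) := \Gamma(x+1)\Gamma(j)/\Gamma(x+j) = (j-1)\int_0^1 t^x(1-t)^{j-2}\,dt$. After the substitution $t = e^{-s}$ this is a Laplace transform of a nonnegative function, so $f_j$ is completely monotone (consistent with $d\mu^{(j)}$ being its Berger measure).

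For (i), the moments of the $p$-th power shift are $f_j(m)^p$, so it suffices to prove $f_j^p$ is completely monotone for every $p > 0$. Write $f_j^p = e^{-\phi_p}$ with $\phi_p(x) := p\bigl[\log\Gamma(x+j) - \log\Gamma(x+1) - \log\Gamma(j)\bigr]$. Using the classical integral representation of the digamma function,
\begin{equation*}
\phi_p'(x) = p\bigl[\psi(x+j) - \psi(x+1)\bigr] = p\int_0^\infty e^{-xt}\,\frac{e^{-t} - e^{-jt}}{1-e^{-t}}\,dt,
\end{equation*}
which is completely monotone as a Laplace transform with a nonnegative kernel (positive because $j > 1$). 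Thus $\phi_p$ is a Bernstein function (that is, $\phi_p \geq 0$ with completely monotone derivative). Since the composition of the completely monotone map $s \mapsto e^{-s}$ with a Bernstein function is completely monotone, $f_j^p$ is completely monotone.

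For (ii), a direct computation (using $\beta_n = \sqrt{\alpha_n\alpha_{n+1}}$ for the Aluthge weights) yields $\tilde{\gamma}_m = \sqrt{\gamma_{m+1}\gamma_m/\gamma_1}$ for the Aluthge moments of any shift. Define recursively $g_0 := f_j$ and $g_{n+1}(x) := \sqrt{g_n(x+1)g_n(x)/g_n(1)}$, so that $g_n$ interpolates the moments of $AT^n(A_j)$. The claim, by induction on $n$, is that $g_n^p$ is completely monotone for every $p > 0$. The base case is (i). For the induction step,
\begin{equation*}
g_{n+1}(x)^p = g_n(1)^{-p/2}\,g_n(x+1)^{p/2}\,g_n(x)^{p/2};
\end{equation*}
by the inductive hypothesis (applied with exponent $p/2$) both $g_n(x)^{p/2}$ and $g_n(x+1)^{p/2}$ are completely monotone (the latter because complete monotonicity is preserved under nonnegative translation), and products of completely monotone functions are completely monotone, so $g_{n+1}^p$ is completely monotone. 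Taking $p = 1$ proves (ii).

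The principal obstacle is (i): establishing complete monotonicity of every positive power of $f_j$. My argument rests squarely on the digamma integral identity and on the Bernstein-composition rule; once (i) is in hand, part (ii) reduces to the bookkeeping above via the explicit recursion for the Aluthge moment interpolants.
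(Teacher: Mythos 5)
Your proof is correct and follows exactly the route the paper indicates: the paper states this result by citation to \cite[Theorem 2.10]{Ex2} and explains only that it is obtained by exhibiting a completely monotone function interpolating the moments, which is precisely what you do (via the digamma integral and the Bernstein-composition rule for part (i), and the Aluthge moment recursion $\tilde{\gamma}_m=\sqrt{\gamma_{m+1}\gamma_m/\gamma_1}$ for part (ii)). Your argument is a complete and valid instantiation of that approach; no gaps.
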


We emphasize again that this mode of proof offers no information about the Berger measure of the resulting shift.

We may formulate our central question in two versions: operator-theoretic and measure-theoretic. \ Observe that there is no Square Problem in the operator-theoretic case, since we know that the Schur product of two subnormal operators is subnormal.

\medskip

\subsection{Operator-theoretic formulation of the Square Root Problem}

Given a subnormal weighted shift $W_\alpha$, under what conditions is its ``square root'' shift $W_{\sqrt{\alpha}}$ subnormal?

\medskip

Note also that by the uniqueness of solutions to the Hamburger moment problem, we may phrase the Square Root Problem as follows:

\medskip

\subsection{Measure-theoretic formulation of the Square Root Problem}

Consider the following ``moment matching'' equation:

\begin{equation}    \label{eq:momentmatching}
\int t^n \, d\mu(t) = \left(\int t^n \, d\nu(t) \right)^2 \hspace{.2in} (n = 0, 1, 2, \ldots)  .
\end{equation}

The \textit{Square Root Problem} can be stated as follows: Given a probability measure $\mu$ (supported on a compact interval in $\mathbb{R} _+$), does there exist a measure $\nu$ such that (\ref{eq:momentmatching}) holds?  \ If so, can one find $\nu$ in terms of $\mu$?

Observe also that in this latter formulation we have the \textit{Square Problem}: Given a probability measure $\nu$ (supported on a closed interval in $\mathbb{R} _+$), what is the measure $\mu$ so that (\ref{eq:momentmatching}) holds?

\medskip

There is a result concerning another sort of transformation of a weighted shift which we will have occasion to mention later, and is the earliest instance we are acquainted with which gives information about the Berger measure. \ It has been common in the study of subnormal shifts for weak subnormalities to consider a ``back step extension'': given a weight sequence $\alpha: \alpha_0,\alpha_1, \alpha_2, \ldots$, form the weight sequence $\alpha(x): x, \alpha_0,\alpha_1, \alpha_2, \ldots$ by prefixing a parameter and consider the resulting shift. \ The following is from \cite{CY} and is a model answer, in that it both answers the question of when the resulting shift is subnormal and provides the measure concretely if it is. \ Let $\delta_z$ denote the Dirac point mass at $z$.

\begin{theorem} \label{th:backstepmeas} (\cite[Proposition 1.5]{CY}) \  
Suppose $W_\alpha$ is a subnormal weighted shift with Berger measure $\mu$. \ Then there exists a subnormal weighted shift with weight sequence $x, \alpha_0, \alpha_1, \ldots$ (a subnormal ``back step extension'') if and only if

\medskip

\noindent (i) $\frac{1}{t} \in L^1(\mu)$, and \newline
\noindent (ii) $x^2 \leq (\|\frac{1}{t}\|_{L^1(\mu)|})^{-1}$.

\medskip

In this case, the Berger measure for the back step extension is
$$\frac{x^2}{t}d\mu(t) + (1-x^2 \|\frac{1}{t}\|_{L^1(\mu)|})d\delta_0(t).$$
\end{theorem}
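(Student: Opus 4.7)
The plan is to prove both directions simultaneously by first computing the moment sequence of the back step extension in terms of $\{\gamma_n\}$, guessing the measure from that, and then inverting the argument to establish necessity.

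First I would write down the moments $\tilde{\gamma}_n$ for a putative back step extension with weights $x,\alpha_0,\alpha_1,\ldots$. Since $\tilde{\gamma}_0=1$ and $\tilde{\gamma}_n = x^2 \prod_{i=0}^{n-2}\alpha_i^2 = x^2 \gamma_{n-1}$ for $n\geq 1$, the subnormality question becomes: can we find a probability measure $\tilde{\mu}$ on $[0,\|W_\alpha\|^2]$ such that $\int t^n\, d\tilde{\mu}(t)= x^2 \gamma_{n-1}$ for $n\geq 1$? The identity $x^2 \gamma_{n-1}=\int t^n \cdot \frac{x^2}{t}\, d\mu(t)$ (valid provided $1/t\in L^1(\mu)$) suggests that the absolutely continuous piece must be $\frac{x^2}{t}\, d\mu(t)$. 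Since its total mass is $x^2\|1/t\|_{L^1(\mu)}$, making $\tilde{\mu}$ a probability measure forces the condition $x^2\|1/t\|_{L^1(\mu)}\leq 1$, and the defect is absorbed by a point mass at the origin, which contributes $0$ to all moments $n\geq 1$. This establishes sufficiency: under (i) and (ii), the displayed $\tilde{\mu}$ is a probability measure on $[0,\|W_\alpha\|^2]$ whose moment sequence coincides with the shift's moment sequence, and hence the back step extension is subnormal with Berger measure $\tilde{\mu}$.

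For the necessity direction, I would assume that a subnormal back step extension exists with some Berger measure $\tilde{\mu}$, and attempt to recover (i), (ii), and the explicit formula from the moment identities $\tilde{\gamma}_{n+1}=x^2 \gamma_n$. The cleanest route is to define an auxiliary measure $d\rho(t) := \frac{t}{x^2}\, d\tilde{\mu}(t)$ on $[0,\|W_\alpha\|^2]$. Its moments are $\int t^n\, d\rho(t)=\frac{1}{x^2}\tilde{\gamma}_{n+1}=\gamma_n$ for all $n\geq 0$, so $\rho$ and $\mu$ share moments on a compact interval and must agree by the uniqueness of the Hausdorff moment problem. Unwinding this equality on $(0,\|W_\alpha\|^2]$ gives $d\tilde{\mu}(t)=\frac{x^2}{t}\, d\mu(t)$ there, and integrating yields $x^2\int \frac{1}{t}\, d\mu(t)= \tilde{\mu}((0,\|W_\alpha\|^2])\leq 1$, which simultaneously establishes (i), (ii), and the form of the absolutely continuous part; the remaining mass $1-x^2\|1/t\|_{L^1(\mu)}$ must reside at $\{0\}$, giving the stated $\delta_0$ term.

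The step I expect to require the most care is the bookkeeping at $t=0$ in the necessity argument: one needs to argue that $\tilde{\mu}$ charges $(0,\|W_\alpha\|^2]$ by exactly the measure $\frac{x^2}{t}d\mu$ (and therefore $1/t\in L^1(\mu)$ in the first place), rather than having some hidden singular part at $0$ interacting with the $\frac{t}{x^2}$ multiplication. This is handled by noting that multiplication by $t$ annihilates any atom at $0$, so the $\rho$-moments only see $\tilde{\mu}|_{(0,\|W_\alpha\|^2]}$, after which the uniqueness theorem for moments on a compact interval forces the absolutely continuous identification. Everything else — verifying the moment sequence of the candidate $\tilde{\mu}$ in the sufficiency half, and the arithmetic of the $\delta_0$ weight — is routine.
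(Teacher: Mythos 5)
Your argument is correct, and both halves are sound: the sufficiency direction is a direct verification that the displayed measure is a probability measure matching the moments $\tilde{\gamma}_0=1$, $\tilde{\gamma}_n=x^2\gamma_{n-1}$, and the necessity direction via the auxiliary measure $d\rho(t)=\frac{t}{x^2}\,d\tilde{\mu}(t)$ and determinacy of the Hausdorff moment problem (with the observation that multiplication by $t$ kills any atom of $\tilde{\mu}$ at the origin) is exactly the right device. Note that the paper itself states this result without proof, citing \cite[Proposition 1.5]{CY}; your proof is essentially the standard argument given there, so there is nothing to flag.
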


\section{A Support Result and Atomic Measures} \label{section2}

We begin by recasting the moment matching equation (\ref{eq:momentmatching}) in terms of product measures. \ Clearly, for each $n$,
\begin{eqnarray*}
\int t^n \, d\mu(t) &=& \left(\int t^n \, d\nu(t) \right)^2\\
                    &=&\left(\int s^n \, d\nu(s) \right)\cdot \left(\int t^n \, d\nu(t) \right) \\
                    &=& \int \hspace{-.1in}\int s^n t^n \, d\nu(s)  d\nu(t)\\
                    &=& \int \hspace{-.1in}\int s^n t^n \, d(\nu\times\nu)(s,t).\\
                    \end{eqnarray*}
If we define $p:\mathbb{R}\times \mathbb{R} \rightarrow \mathbb{R}$ by $p(x,y) := xy$, the equation above suggests that $\mu = (\nu \times \nu)\circ p^{-1}$, which  is indeed correct. \ This was obtained independently, and in greater generality, in \cite[Lemma 3.1]{SS}, as was a crucial relationship about supports (\cite[Theorem 3.3]{SS}) which we record in the form needed for our investigations. \ (The proof of the second assertion may be done directly from measure theory (as in \cite{SS}) or less naturally if more neatly by using the spectral mapping theory and the relationship between support and spectrum for multiplication operators.) \ Let $\mbox{\rm supp}(\mu)$ denote the (closed) support of a measure $\mu$, and denote the closure of a set $S$ by $\overline{S}$.

\begin{theorem} (\cite[Lemma 3.1 and Theorem 3.3]{SS}) \ 
Suppose that $\mu$ and $\nu$ are measures supported in some compact subset of $\mathbb{R}_+$ satisfying (\ref{eq:momentmatching}). \ Then with $p(x,y) \equiv xy$ as above,
\begin{itemize}
\item[(i)]$\mu = (\nu \times \nu)\circ p^{-1}$ \\
\item[(ii)]  $\mbox{\rm supp}(\mu) = \overline{(\mbox{\rm supp}(\nu))^2}$.\\
\end{itemize}
\end{theorem}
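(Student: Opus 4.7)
The plan for (i) is to combine the calculation that already appears immediately before the statement with a moment uniqueness argument. That calculation, using Fubini and the change-of-variables formula for pushforward measures, rewrites $\left(\int t^n \, d\nu(t)\right)^2$ as $\int x^n \, d((\nu \times \nu) \circ p^{-1})(x)$. Combined with the moment matching hypothesis (\ref{eq:momentmatching}), this shows that $\mu$ and $(\nu \times \nu) \circ p^{-1}$ have identical moments of every order. Both live on a compact subset of $\mathbb{R}_+$ — the pushforward inherits compact support from the continuity of $p$ restricted to the compact set $\supp(\nu) \times \supp(\nu)$ — so since polynomials are uniformly dense in $C(K)$ for compact $K \subset \mathbb{R}$ (Weierstrass), the two measures must coincide.

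For (ii), I would derive the support identity from (i) by invoking the general topological fact that for a continuous map $f: X \to Y$ between second countable Hausdorff spaces and any finite Borel measure $\lambda$ on $X$, one has $\supp(f_*\lambda) = \overline{f(\supp(\lambda))}$. Both inclusions are short. The complement of $\overline{f(\supp(\lambda))}$ is open and its $f$-preimage misses $\supp(\lambda)$ (so it carries zero $\lambda$-measure, hence zero $f_*\lambda$-measure). Conversely, for $y = f(x)$ with $x \in \supp(\lambda)$, every open neighborhood $V$ of $y$ satisfies $x \in f^{-1}(V)$, so $\lambda(f^{-1}(V)) > 0$ and therefore $(f_*\lambda)(V) > 0$.

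Applying this with $f = p$ and $\lambda = \nu \times \nu$, together with the standard identity $\supp(\nu \times \nu) = \supp(\nu) \times \supp(\nu)$ (valid on second countable spaces) and the obvious $p(\supp(\nu) \times \supp(\nu)) = (\supp(\nu))^2$, yields $\supp(\mu) = \overline{(\supp(\nu))^2}$, as required. I do not anticipate a substantive obstacle: the only non-mechanical ingredient is the pushforward-support equality, which is elementary in the present Euclidean setting but is sometimes omitted in standard references, so I would write it out. The alternative spectral route alluded to in the excerpt would identify $\supp(\mu)$ with $\sigma(M_t)$ on $L^2(\mu)$, transfer this via the pushforward from (i) to $\sigma(M_{st})$ on $L^2(\nu \times \nu) \cong L^2(\nu) \otimes L^2(\nu)$, and then invoke the spectral mapping theorem for the product of two commuting self-adjoint operators of the form $M_s \otimes I$ and $I \otimes M_t$ to conclude that this spectrum equals $\overline{\sigma(M_s) \cdot \sigma(M_t)} = \overline{(\supp(\nu))^2}$.
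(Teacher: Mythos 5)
Your argument is correct and takes essentially the route the paper indicates: part (i) is the displayed moment computation combined with determinacy of compactly supported measures (Weierstrass plus Riesz), and part (ii) is the direct measure-theoretic identification of the support of a pushforward, which is precisely the ``directly from measure theory'' route the paper attributes to \cite{SS}. The paper itself defers all details to that citation, so your write-up simply supplies what is left implicit there, and I see no gap in it.
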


For convenience of notation, we abbreviate the situation in which (\ref{eq:momentmatching}) and the relationships (i) and (ii) above hold by writing $\mu = \nu^2$. \ Observe that in this situation
$$\mu(E) = (\nu \times \nu)(p^{-1}(E)) = (\nu \times \nu)(\{(s,t): s t \in E\}).$$
It will be useful later to note a simple geometrical fact, that the $\mu$ measure of some  interval $[a,b]$ is the $\nu \times \nu$ measure of the planar region with hyperbolic boundaries $x y = a$ and $x y = b$.

We pause to record one easy result about either restrictions of shifts, or, if one prefers, measures of the form $t^n d\mu(t)$. \ Recall that given a weighted shift $W$ with Berger measure $\mu$, the restriction of $W$ to the canonical invariant subspace obtained from $\{e_0, e_1, \ldots, e_{n-1}\}^\perp$ has Berger measure $t^n \mu(t)$ up to the normalization factor $\int_0^1 t^n d\mu(t)$ (to obtain a probability measure). \ The following is immediate by comparison of moments.

\begin{proposition}  \label{prop:restfollow}
Suppose one has solved $\mu = \nu^2$ and fix $n \ge 1$;  then $t^n \mu \approx (t^n \nu)^2$ (up to normalization). \ More precisely,
$$\left(\frac{1}{\int_0^1 t^n d\mu(t)}\right) t^n d\mu(t) = \left(\left(\frac{1}{\int_0^1 t^n d\mu(t)}\right)^{1/2} t^n d \nu(t) \right)^2.$$
\end{proposition}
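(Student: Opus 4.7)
The plan is to verify the claim by straightforward comparison of moments, invoking uniqueness of the Hamburger moment problem on a compact interval (which is the same tool implicit throughout the paper when passing between $\mu = \nu^2$ and equation (\ref{eq:momentmatching})).

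First I would fix the two normalizing constants $C := \int_0^1 t^n \, d\mu(t)$ and $D := \int_0^1 t^n \, d\nu(t)$. The hypothesis $\mu = \nu^2$, applied to the $n$-th moment, instantly gives the key relation $C = D^2$, and in particular $C^{1/2} = D$. This is the only ``computation'' needed; everything else is bookkeeping.

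Next I would compute the $m$-th moment of the left-hand measure:
\begin{equation*}
\int t^m \cdot C^{-1} t^n \, d\mu(t) \;=\; C^{-1}\int t^{m+n} \, d\mu(t) \;=\; C^{-1}\Bigl(\int t^{m+n}\, d\nu(t)\Bigr)^{2},
\end{equation*}
where the second equality uses $\mu = \nu^{2}$ at index $m+n$. For the right-hand side, the measure inside the square has $m$-th moment $C^{-1/2}\int t^{m+n}\, d\nu(t)$, so its square is $C^{-1}(\int t^{m+n}\, d\nu(t))^{2}$, matching the left-hand side moment for every $m \geq 0$.

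Since the two probability measures implicated (the normalized $t^n d\mu$ on the left, and the measure of the form $\xi^2$ produced by the right-hand side) are both supported on a common compact subset of $\mathbb{R}_+$ and have identical moments of every order, they coincide, which is exactly the asserted identity in the form (\ref{eq:momentmatching}). There is no genuine obstacle here; the only thing to watch is that the constant $C^{-1/2}$ on the right-hand side is inserted as a scalar multiple of the measure $t^n d\nu(t)$ (not as a square root of $t^n d\nu(t)$), so that when one computes $(\int t^m \, d\xi(t))^{2}$ the factor $C^{-1/2}$ becomes $C^{-1}$, canceling the normalization of the left-hand side.
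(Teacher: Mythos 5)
Your proposal is correct and follows exactly the route the paper intends: the paper disposes of this proposition with the single remark that it is ``immediate by comparison of moments,'' and your computation (using $C=D^2$ from the $n$-th moment of $\mu=\nu^2$ and matching the $m$-th moments of both sides) is precisely that verification spelled out. Nothing further is needed.
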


Turning to atomic measures, the Square Problem is easy. \ Recall that $\delta_z$ denotes the Dirac point mass at $z$, and, for a set $S$, let $S^2 := \{s \cdot t: s, t \in S\}$. \ The following comes simply from comparing moments.

\begin{proposition}  \label{prop:squareptFA}
Suppose $\nu \equiv \sum_{i=1}^\infty \phi_i \delta_{x_i}$ is a probability measure. \ Let $X \equiv \{x_i\}^{\infty}_{i=1}$. \ Then
$$\nu^2 = \sum_{z \in X^2} (\sum_{x_i\cdot x_j = z} \phi_i \phi_j) \delta_z.$$
\end{proposition}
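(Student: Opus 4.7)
The plan is to verify the formula by computing moments and invoking the uniqueness of solutions to the Hamburger moment problem for compactly supported measures (as noted in the paragraph just before \S1.1). Since $\nu$ is a probability measure supported in some compact subset of $\mathbb{R}_+$, the set $X$ is bounded, so $X^2$ is bounded, and the candidate measure
$$\mu_0 := \sum_{z \in X^2} \Bigl(\sum_{x_i x_j = z} \phi_i \phi_j\Bigr) \delta_z$$
is supported in a compact subset of $\mathbb{R}_+$ as well. The strategy is to show that the moments of $\mu_0$ agree with the squares of the moments of $\nu$, which by uniqueness of the associated moment problem will force $\mu_0 = \nu^2$ in the sense of \eqref{eq:momentmatching}.

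First I would compute $\int t^n \, d\mu_0(t)$ by regrouping the double sum. The key observation is that every ordered pair $(i,j)$ contributes to the unique atom $z = x_i x_j \in X^2$, so the bookkeeping collapses cleanly:
$$\int t^n \, d\mu_0(t) = \sum_{z \in X^2} \Bigl(\sum_{x_i x_j = z} \phi_i \phi_j\Bigr) z^n = \sum_{i,j} \phi_i \phi_j (x_i x_j)^n = \Bigl(\sum_i \phi_i x_i^n\Bigr)^2 = \Bigl(\int t^n \, d\nu(t)\Bigr)^2.$$
Thus $\mu_0$ satisfies \eqref{eq:momentmatching}, and by uniqueness $\mu_0 = \nu^2$.

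As an alternative derivation (which I would probably mention in one sentence for conceptual clarity), one can use the push-forward description $\mu = (\nu \times \nu) \circ p^{-1}$ from part (i) of the preceding theorem. Since $\nu \times \nu = \sum_{i,j} \phi_i \phi_j \, \delta_{(x_i, x_j)}$, applying $p(x,y) = xy$ pushes this to $\sum_{i,j} \phi_i \phi_j \, \delta_{x_i x_j}$, and collecting point masses sharing a common value $z \in X^2$ yields the stated formula directly. The only ``obstacle'' here is really just the combinatorial care needed when distinct pairs $(i,j)$ and $(i',j')$ happen to yield $x_i x_j = x_{i'} x_{j'}$; this is handled automatically by the inner sum $\sum_{x_i x_j = z}$, and once that is recognized there is nothing more to do.
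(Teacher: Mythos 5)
Your proposal is correct and matches the paper's approach: the paper simply remarks that the proposition ``comes simply from comparing moments,'' which is exactly the computation you carry out (regrouping the double sum and invoking uniqueness of the moment problem for compactly supported measures). Your alternative push-forward derivation is also consistent with the paper's earlier identification $\mu=(\nu\times\nu)\circ p^{-1}$.
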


We are interested in contraction operators and their Berger measures, and therefore, we henceforth specialize to the case in which $\|W_\alpha\| \leq 1$; thus, our measures are probability measures supported in $[0,1]$ and, for convenience, assume that $1$ is in the support. (This effectively means that $\|W_\alpha\| = 1$.) \ Also, when considering atomic measures, we assume as well that there is an atom at the point $1$.

For finitely atomic measures, there is a solution to the Square Root Problem -- in fact, two -- although they are not completely satisfactory.

\begin{proposition} Let $\mu$ be a finitely atomic probability measure with support contained in $[0,1]$, and having positive mass at an atom at $1$, say $\mu \equiv \sum_{i=0}^N \rho_i \delta_{x_i}$, where the $x_i$ are in increasing order and $\rho_i > 0$ for all $i$. \ (Note $x_N = 1$.) \ If there exists $\nu$ such that $\mu = \nu^2$, then
$$\mbox{\rm supp}(\nu)=\left\{
\begin{array}{rr}
\{0\} \cup ([\sqrt{x_1}, 1]\cap \mbox{\rm supp}(\mu))   & \mbox{\rm if }x_0 = 0 , \\
([\sqrt{x_0}, 1]\cap \mbox{\rm supp}(\mu))  & \mbox{\rm if }x_0 \neq 0 . \\
\end{array}%
\right.$$

\noindent If the square of the appropriate set above is not $\mbox{\rm supp}(\mu)$, then $\mu$ has no square root.

Further, given the measure $\mu$, one may solve ``algorithmically'' for the only possible ``candidate'' $\hat{\nu}$, by assigning the needed masses on the support of $\hat{\nu}$ one by one, in decreasing order of the point in the support. \ (The attempt may fail in various ways at some step, if no (positive) assignment is possible.) \ Alternatively, with $\{\gamma_n\}$ the moments for $\mu$, and $\hat{S} = \{s_0, s_1, \ldots, s_{M-1}\}$ the required support set above, assign masses $\varphi_k$ to $\hat{\nu}$ by solving a Vandermonde type equation

\medskip

\begin{equation}  \label{eq:vdM}
\left(
\begin{array}{cccc}
1 & 1 & \ldots & 1 \\
s_0 & s_1 & \ldots & s_{M-1} \\
s_0^2 & s_1^2 & \ldots & s_{M-1}^2 \\
\cdots & \cdots &  & \cdots \\
s_0^{M-1} & s_1^{M-1} & \ldots & s_{M-1}^{M-1} \\
\end{array}
\right) \cdot
\left(
\begin{array}{c}
\varphi_0  \\
\varphi_1   \\
\varphi_2   \\
\vdots  \\
\varphi_{M-1} \\
\end{array}
\right) =
\left(
\begin{array}{c}
\sqrt{\gamma_0} \\
\sqrt{\gamma_1}   \\
\sqrt{\gamma_2} \\
\vdots  \\
\sqrt{\gamma_{M-1}} \\
\end{array}
\right).
\end{equation}

This candidate need not be successful, and all but one of the masses of $\hat{\nu}^2$ must be compared to those of $\mu$ to determine if this candidate is satisfactory. \ Alternatively, we must ``match'' (at least) $N + 1$ (the cardinality of the support of $\mu$) of the square roots of the moments of $\mu$ (equivalently, extend the equation in (\ref{eq:vdM}) to have the obvious $N + 1$ rows).

\end{proposition}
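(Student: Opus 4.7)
The proof falls into two blocks: pinning down $\supp(\nu)$, then recovering the masses. For the inclusion $\supp(\nu) \subseteq \{0\}\cup([\sqrt{x_1},1]\cap\supp(\mu))$ (case 1; analogous in case 2), I would invoke the earlier support theorem to get $\supp(\mu)=\overline{\supp(\nu)^2}$, noting that closures are automatic for finite supports in $[0,1]$. Then $1\in\supp(\mu)$ together with $\supp(\nu)\subseteq[0,1]$ forces $1\in\supp(\nu)$, since the decomposition $1=s\cdot t$ with $s,t\le 1$ requires $s=t=1$; for any $y\in\supp(\nu)$, $y=y\cdot 1\in\supp(\mu)$, so $\supp(\nu)\subseteq\{0\}\cup\supp(\mu)$, with $0\in\supp(\nu)$ exactly when $x_0=0$; finally, the smallest positive element $y^*$ of $\supp(\nu)$ satisfies $(y^*)^2=\min(\supp(\mu)\setminus\{0\})$, giving $y^*=\sqrt{x_1}$ (case 1) or $\sqrt{x_0}$ (case 2). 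Together these four observations yield the containment $\subseteq$.

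For the reverse containment, which is the delicate step, I would induct on the candidate atoms in decreasing order. With $1\in\supp(\nu)$ already known, suppose all candidate atoms strictly greater than $x_i$ lie in $\supp(\nu)$; then the mass $\mu(\{x_i\})$ equals $2\,\nu(\{1\})\,\nu(\{x_i\})$ plus contributions $\nu(\{x_j\})\nu(\{x_k\})$ coming from pairs of strictly larger candidate atoms with $x_j x_k=x_i$ (counted appropriately). If $\nu(\{x_i\})=0$ were possible, removing $x_i$ from $\supp(\nu)$ would eliminate all products of the form $x_i\cdot s$ from $\supp(\nu)^2$, and the main obstacle is to show that at least one such product (for instance $x_i\cdot 1=x_i$ itself, or $x_i^2$ in cases where neither admits an alternative decomposition inside the candidate) cannot be recovered from the remaining pairs---contradicting $\supp(\nu)^2=\supp(\mu)$. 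This ``indispensability'' argument requires a case analysis according to whether $x_i$, $x_i^2$, or $x_i$ times some larger candidate atom admits an alternative decomposition. Once this is done, the second assertion (no $\nu$ if the candidate's square is not $\supp(\mu)$) is immediate: any $\nu$ is forced to have the candidate as its support, so $\supp(\nu)^2$ must coincide with the candidate's square.

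Finally, once $\hat S=\{s_0<\cdots<s_{M-1}=1\}$ is the required support, the algorithm for the masses falls out of the same mass equation: $\hat\nu(\{1\})=\sqrt{\mu(\{1\})}$ from the unique decomposition $1=1\cdot 1$, and proceeding downward, $\hat\nu(\{s_k\})$ is linearly determined by $\mu(\{s_k\})$ and the already-known larger masses, since any pair $(s_i,s_j)\ne(s_k,1),(1,s_k)$ producing $s_k$ must have both $s_i,s_j>s_k$ (as $s_i,s_j<1$ forces $s_is_j<\min(s_i,s_j)$). Failure of positivity at any step then precludes the existence of $\nu$. For the Vandermonde alternative, the moments of any putative $\nu$ must equal $\sqrt{\gamma_n}$, and the first $M$ such equations specialized to the nodes $s_0,\ldots,s_{M-1}$ yield the displayed Vandermonde system, invertible because the $s_k$ are distinct. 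Since only $M$ of the $N+1$ square-rooted moments are enforced this way, the remaining $N+1-M$ (equivalently, the $\hat\nu^2$-masses at atoms of $\mu$ outside $\hat S$) must be checked for agreement with $\mu$ before accepting $\hat\nu$ as a genuine square root.
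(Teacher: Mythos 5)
Your first block---the containment $\mbox{\rm supp}(\nu)\subseteq\{0\}\cup([\sqrt{x_1},1]\cap \mbox{\rm supp}(\mu))$ (resp.\ $[\sqrt{x_0},1]\cap \mbox{\rm supp}(\mu)$), via $1\in\mbox{\rm supp}(\nu)$, $y=y\cdot 1\in\mbox{\rm supp}(\mu)$ for every atom $y$ of $\nu$, and $(y^*)^2=\min(\mbox{\rm supp}(\mu)\setminus\{0\})$---is correct and is in substance all the paper itself supplies for the support claim (the paper says only that each atom of $\nu$ is an atom of $\nu^2$ and that the claims ``follow easily''). The genuine gap is in your reverse containment. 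You rightly flag the ``indispensability'' step as the main obstacle, but you never carry it out, and no case analysis will: the step is false, because a candidate atom can be recovered entirely from products of strictly larger atoms. Concretely, take $\nu=\frac{1}{3}(\delta_{4/5}+\delta_{9/10}+\delta_{1})$, so that $\mu=\nu^2$ has support $\{16/25,\,18/25,\,4/5,\,81/100,\,9/10,\,1\}$; here $x_0=16/25$, $\sqrt{x_0}=4/5$, and $[\sqrt{x_0},1]\cap\mbox{\rm supp}(\mu)=\{4/5,\,81/100,\,9/10,\,1\}$, whereas $\mbox{\rm supp}(\nu)=\{4/5,\,9/10,\,1\}$: the point $81/100=(9/10)^2$ lies in the displayed set but carries no $\nu$-mass. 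Only the inclusion $\subseteq$ holds in general; the displayed set is the unique \emph{maximal} candidate support, and the algorithm may legitimately assign mass $0$ to some of its points (note the paper's failure criterion is a mass \emph{less than} zero). The same example shows that your derivation of the second assertion (``any $\nu$ is forced to have the candidate as its support, so $\mbox{\rm supp}(\nu)^2$ must coincide with the candidate's square'') does not go through, since $\nu$ need not charge every candidate point.

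The remainder of your argument tracks the paper's proof: the downward recursion for the masses rests, exactly as in the paper, on the observation that any factorization $s_k=s_is_j$ with $s_i,s_j<1$ forces $s_i,s_j>s_k$, so $\hat{\nu}(\{s_k\})$ is linearly determined by the already-assigned larger masses; the Vandermonde system is invertible because the nodes are distinct; and the necessity of verifying $N+1$ square-rooted moments (equivalently, all but one of the masses of $\hat{\nu}^2$) before accepting the candidate matches the paper's discussion. I would recommend restating the support claim as an inclusion (or as the identification of the unique candidate support set), after which your proof of that weaker, correct statement is complete and everything downstream survives unchanged.
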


\begin{proof}
{\bf Algorithmic approach (matching of masses)} \ Since we have assumed there is an atom of $\mu$ at $1$, it is clear from Proposition \ref{prop:squareptFA} that any possible solution $\nu$ must be supported in $[0,1]$ and must have an atom at $1$ as well. \ But then each atom of $\nu$ is also an atom of $\nu^2$, and the claims about the support of $\nu$ follow easily.

We assume henceforth that $M \geq 2$, and, if $M = 2$, that neither of the two atoms of $\mu$  is at zero. \ (The other results are trivial or are obtained by easy modifications of the arguments to follow.) \ Recall that $\mbox{\rm supp}(\mu) = \{x_0,\ldots,x_N\}$. \ By the support claim above, the support of any possible candidate $\hat{\nu}$ is of the form $\hat{S} = \{s_0, s_1, \ldots, s_{M-1}\}$, where $s_{M-1} = 1$ and either $s_0 = 0$ and $s_1 = \sqrt{x_1}$ (if $0 = x_0 \in \mbox{\rm supp}(\mu)$) or $s_0 = \sqrt{x_0}$ if $0 \notin \mbox{\rm supp}(\mu)$. \ To produce the only possible candidate algorithmically, observe that
$\hat{\nu}= \sum_{j=0}^{M-1} \varphi_j \delta_{s_j}$
must have $\varphi_{M-1}^2 = \rho_N$ (in order that $\hat{\nu}^2(\{1\}) = \mu(\{1\})$. \ But then, taking into account the equation in Prop.\ref{prop:squareptFA}, $\varphi_{M-2}$ (the mass associated with $s_{M-2} = x_{N-1}$) must be chosen to satisfy
$$2 \varphi_{M-1} \varphi_{M-2} = \rho_{N-1},$$
yielding
$$\varphi_{M-2} = \frac{\rho_{N-1}}{2 \sqrt{\rho_N}}.$$
Consider now the task of fixing a mass at $s_{M-3} = x_{N-2}$. \ If $x_{N-1}^2 > x_{N-2}$ then the process has failed and $\mu$ has no square root, because the square of the required support set for any possible $\hat{\nu}$ is not the support set for $\mu$. \ If $x_{N-1}^2 = x_{N-2}$, then $\varphi_{M-3}$ must be chosen in accordance with
$$2 \varphi_{M-3} \varphi_{M-1}  + \varphi_{M-2}^2 = \rho_{N-2};$$
if $x_{N-1}^2 < x_{N-2}$, then $\varphi_{M-3}$ must be chosen in accordance with
$$2 \varphi_{M-3} \varphi_{M-1} = \rho_{N-2}.$$
(Of course, if the relevant equation requires a value of $\varphi_{M-3}$ less than zero the process fails and there is no successful candidate, since we must be building a (positive) measure.) \ Continuing in this fashion, we will eventually either fail or have made (forced) assignments to all the $\varphi_j$ and produced the only possible candidate for $\nu$. \ (It is left to the reader to resolve the minor anomaly present if $s_0 = x_0 = 0$.)

Alternatively, to generate the candidate, it is well known (see \cite[Theorem 3.9]{CF}) that given a known finite support set and a known target set of moments, the masses $\varphi_j$ at the atoms $s_j$ may be obtained by solving the Vandermonde equation in the statement, yielding all the $\varphi_j$ ``at once.''

Either of these candidates may fail, even if the algorithm does not obviously fail in its execution; if what results is not a probability measure the candidate is not successful (note that the Vandermonde forces a probability measure). \ Further, it is easy to construct an example (by squaring a finitely atomic probability measure, perturbing the masses of the two smallest atoms slightly, and using the result as $\mu$) where an apparently viable candidate still fails. \ However, if an apparently viable \textit{probability} measure candidate when squared matches all but one of the probability measure target masses, it is successful since it then must match all of them.

{\bf Vandermonde approach (matching of moments)}. \ Alternatively, if a candidate has moments the appropriate values
$$\sqrt{\gamma_0}, \ldots, \sqrt{\gamma_{M-1}}, \sqrt{\gamma_M}, \ldots, \sqrt{\gamma_N}$$
 (which may be thought of as a solution to a non-square Vandermonde type equation which includes the equation yielding the candidate), the candidate is successful. \ This is certainly sufficient, since it ensures that $\hat{\nu}^2$ is a measure with support of size (at most) $N + 1$ and a subset of the support of $\mu$ matching $N + 1$ moments of $\mu$;  by the version of the Vandermonde equation for $\mu$, and invertibility of the Vandermonde matrix, this must be $\mu$. \ On the other hand, by a small perturbation of $\gamma_N$ to $\gamma_N^\prime$, we may clearly construct a measure $\mu^\prime$ with support set the same as that of $\mu$ and such that $\gamma_j = \gamma_j^\prime$ ($j = 0, \ldots, N-1$) but $\gamma_N \neq \gamma_N^\prime$; $\hat{\nu}^2$ cannot be both $\mu$ and $\mu^\prime$, so it is not sufficient to match fewer than $N + 1$ moments.
 \end{proof}
 
If one is willing to accept a transfinite algorithm, the algorithmic approach works for certain infinite support sets (such as $\{1, r, r^2, r^3, \ldots\}$). \ What is needed is that there is a finite collection of mass matching that can take place ``first,'' a finite number that can thereafter be solved ``second,'' and so on. \ However, we do not know how to handle some support set like
$$\{r^{p_n/2^n}: n \in \mathbb{N}, p_n \in \mathbb{Z}_+, \rho_n \leq 2^n \}.$$

The following corollary of the remarks above about support sets was also obtained in \cite{SS} and follows from a simple calculation.

\begin{corollary}
 Neither a two-atomic nor a four-atomic measure can have a square root, unless one atom is at zero.
 \end{corollary}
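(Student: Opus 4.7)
The plan is to exploit the support identity $\supp(\mu) = \overline{(\supp(\nu))^2}$ from the preceding theorem of Stochel--Stochel together with a simple lower bound on the number of distinct products of a finite set of positive reals. Suppose $\mu$ has $N \in \{2, 4\}$ atoms, none of them at $0$, and let $\nu$ be a putative square root. Since $0 \notin \supp(\mu)$, $0$ cannot belong to $\supp(\nu)$, for otherwise $0 = 0 \cdot 1$ would force $0$ into $(\supp(\nu))^2 = \supp(\mu)$. Because $s \mapsto s^2$ injects $\supp(\nu)$ into $\supp(\mu)$, $\supp(\nu)$ is automatically finite; write $\supp(\nu) = \{s_0 < s_1 < \cdots < s_{k-1}\}$ with each $s_i > 0$ and (since $1 \in \supp(\mu)$) $s_{k-1} = 1$. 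The product set $S := \{s_i s_j : 0 \le i \le j \le k-1\}$ is then already closed and coincides with $\supp(\mu)$, so $|S| = N$.

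The core step is a lower bound on $|S|$. The chain
$$s_0^2 < s_0 s_1 < s_0 s_2 < \cdots < s_0 s_{k-1} < s_1 s_{k-1} < s_2 s_{k-1} < \cdots < s_{k-1}^2$$
is strictly increasing---multiplying $s_0 < s_1 < \cdots < s_{k-1}$ by the fixed positive constant $s_0$, and then by $s_{k-1}$, preserves strict inequalities---and has $2k - 1$ distinct terms. Hence $N = |S| \ge 2k - 1$, while a trivial count of unordered pairs $(i,j)$ with $i \le j$ yields the upper bound $|S| \le \binom{k+1}{2}$.

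Applying both bounds: for $N = 2$ we get $k \le 3/2$ and hence $k = 1$, forcing $|S| = 1 \ne 2$; for $N = 4$ we get $k \le 5/2$ and hence $k \le 2$, forcing $|S| \le 3 < 4$. Either is a contradiction, so no $\nu$ can exist. I do not foresee any genuine obstacle: the entire argument rests on producing the explicit strictly increasing chain of products of length $2k - 1$, which is immediate from the monotonicity of multiplication by a positive constant. The only mildly subtle bookkeeping is the initial reduction to finite $\supp(\nu)$, but this is forced by the injectivity of squaring on $(0,\infty)$ applied to the map $\supp(\nu) \to \supp(\mu)$.
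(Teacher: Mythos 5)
Your proof is correct and follows the same route the paper intends: the corollary is stated there as a consequence of the Stochel--Stochel support identity via ``a simple calculation,'' and your counting bounds $2k-1 \le |S| \le \binom{k+1}{2}$ on the product set of a $k$-point subset of $(0,\infty)$ are precisely that calculation, made explicit. (One trivial remark: to exclude $0$ from $\supp(\nu)$ you should use $0 = 0\cdot 0$ rather than $0 = 0\cdot 1$, since at that point you have not yet established that $1 \in \supp(\nu)$.)
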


\begin{remark} \ (Measures with a geometric series as support). \ For some measures whose support is $\{r^n:n \ge 0\}$ for some $0<r<1$, we can obtain some results by re-interpreting the questions as one of generating functions. \ If
$$\mu = \sum_{n=0}^\infty \rho_n \delta_{r^n} \hspace{.2in} \mbox{\rm and }\hspace{.2in}
\nu = \sum_{n=0}^\infty \varphi_n \delta_{r^n},$$
to say $\mu = \nu^2$ is equivalent to
$$\sum_{n=0}^\infty \rho_n x^n = \left(\sum_{n=0}^\infty \varphi_n x^n \right)^2.$$
One solves by coefficient matching, equivalent to our ``algorithmic approach'' of matching masses. \ Since $r$ does not appear, it is clear that such an equation is solvable for one $r$ if and only if it is solvable for all $r$.

Sometimes the generating function approach gives an immediate answer. \ If $\rho_n = \left(\frac{1}{2}\right)^{n+1}$, then the generating function is
$$\frac{1}{2} + \frac{1}{2^2}z + \frac{1}{2^3} z^2 + \ldots = \frac{1}{2-z}.$$
But the expansion for its square root is
$$\frac{1}{\sqrt{2-z}} = \frac{\sqrt{2}}{2} + \frac{1}{8}z + \frac{3}{64}z^2 + \ldots$$
and the coefficients give the masses for the square root measure.
In general probability distributions on the non-negative integers may be re-interpreted on the set $\{r^n:n=0, 1, 2, \ldots\}$ and generating functions give square roots in some known cases. \ As well, if we choose $r$ and $s$ so that $r^n s^m = r^i s^j$ forces $n=i$ and $m=j$ ($r = 1/3$ and $s = 1/7$, say), products of generating functions in two variables may clearly be used in certain tractable cases.
\end{remark}

\section{Absolutely Continuous Measures} \label{section3}

For ease of presentation in what follows, we regard functions as living only on $[0,1]$, and omit products with $\chi_{[0,1]}$ throughout.

If we seek to solve the Square Problem $\mu = \nu^2$, and $\nu$ is absolutely continuous with respect to Lebesgue measure on $[0,1]$, writing $d\nu(t) = g(t) dt$ with $g$ the Radon-Nikodym derivative in $L^1$, it is reasonable to hope that $\mu$ is likewise absolutely continuous and to pursue its Radon-Nikodym derivative.

Recall that with $p(x,y) \equiv xy$, we have $\mu(E) = (\nu \times \nu)(p^{-1}(E))$. \ Observe first that $\mu$ is absolutely continuous with respect to Lebesgue measure; for, consider a Lebesgue null set $N \subseteq [0,1]$ covered by a union of (relatively) open intervals of small total measure. \ If $I$ is such a small open interval, $p^{-1}(I)$ is a narrow ``hyperbolic slice'' in the unit square;  the largest of these, in the case $I$ is an interval $[0, b)$, has area $b + 2b(-ln b)$. \ Therefore the $\nu \times \nu$ measure of this set is small, and thus the inverse image under $p$ of a null set is a $\nu \times \nu$ null set, and $\mu$ is absolutely continuous with respect to Lebesgue measure. \ Denote the Radon-Nikodym derivative of $\mu$ by $f$.

To find $f$, it suffices to find $f(a)$ for $0 < a < 1$, and to do this it is sufficient (a.e.) to take $$\lim_{n \rightarrow \infty} \frac{\mu([a, a+1/n])}{1/n}$$
(this approach is due to R. de Possel -- see \cite[page 269]{Ra}). \
Taking $E = [a, a + 1/n]$ with $0 < a < 1$ and $1/n$ small, and recalling that $p^{-1}(E)$ is the region of the unit square bounded by the hyperbolas $ x y = a$ and $ x y = a + 1/n$, we obtain
\begin{eqnarray*}
\mu([a, a + 1/n]) &=& \int \int_{p^{-1}(E)}  1 \, d\nu \times d\nu \\
                       &=& \int_a^{a + 1/n} \int_{a/x}^1 1 \, \, g(y) g(x) \, dy \, dx +  \\
                     & &  \hspace*{.1in} + \int_{a + 1/n}^1 \int_{a/x}^{(a + 1/n)/x} 1 \, \, g(y) g(x) \, dy \, dx  \\
                     &=&  \int_{a}^1 \int_{a/x}^{(a + 1/n)/x} 1 \, \, g(y) g(x) \, dy \, dx , \\
\end{eqnarray*}
(where we have used that $g$ vanishes outside $[0,1]$). \ Then with $f$ the Radon-Nikodym derivative of $\mu$,
\begin{eqnarray}
f(a) = \lim_{n \rightarrow \infty} \frac{\mu([a, a+1/n])}{1/n} &=& \lim_{n \rightarrow \infty}  \left(\frac{1}{1/n}\right) \int_{a}^1 \int_{a/x}^{(a + 1/n)/x} 1 \, \, g(y) g(x) \, dy \, dx  \nonumber \\
     &=&  \int_{a}^1 \left(\lim_{n \rightarrow \infty} \frac{\int_{a/x}^{(a + 1/n)/x} 1 \, \, g(y) \, dy }  {\frac{1}{n x}}\right) \, \frac{1}{x} g(x) \, dx  \nonumber \\
     &=&  \int_{a}^1 g(\frac{a}{x}) g(x) \frac{1}{x} \, dx  \hspace*{.2in} (\mbox{\rm a.e.}), \nonumber \\  \label{neweq} 
\end{eqnarray}
where the limit is moved inside the integral using (for example) the Lebesgue
Dominated Convergence Theorem since $\int g(y) \, dy$ is continuous. \ We thus have:

\begin{proposition}
Let $\mu = \nu^2$ and assume that $d \nu(t) = g(t)dt$, with $g \in L^1([0,1])$. \ Then $d\mu(t)=f(t)dt$, where 
\begin{equation}
f(a)=\int_{a}^1 g(\frac{a}{x}) g(x) \frac{1}{x} \, dx  \hspace*{.2in} (\mbox{\rm a.e.}).
\end{equation}
\end{proposition}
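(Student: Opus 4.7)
The plan is to essentially organize the derivation that already appears in the paragraphs preceding the statement into a clean proof, with particular care taken over the two limit-and-integral interchanges.

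First, I would establish absolute continuity of $\mu$ with respect to Lebesgue measure on $[0,1]$. Since $\mu = (\nu\times\nu)\circ p^{-1}$ with $p(x,y)=xy$, and $\nu\times\nu$ is absolutely continuous with respect to planar Lebesgue measure (with density $g(x)g(y)$), it suffices to show that $p^{-1}(N)$ has planar Lebesgue measure zero whenever $N\subset[0,1]$ has measure zero. Covering $N$ by a union of open intervals of total length $<\epsilon$, each interval $[c,d]$ pulls back to the hyperbolic slice $\{(x,y)\in[0,1]^2:c\le xy\le d\}$, whose area is bounded (uniformly in the worst case $c=0$) by $(d-c)(1-\log(d-c))$, which goes to $0$ with $d-c$. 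Summing gives planar measure of $p^{-1}(N)$ equal to $0$, hence $\nu\times\nu(p^{-1}(N))=0=\mu(N)$.

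Next, fix $0<a<1$ and compute $\mu([a,a+1/n])$ by integrating the density $g(x)g(y)$ of $\nu\times\nu$ over the hyperbolic slice $\{(x,y)\in[0,1]^2: a\le xy\le a+1/n\}$. The slice decomposes into two pieces depending on whether $x\in[a,a+1/n]$ (in which case $y$ ranges over $[a/x,1]$) or $x\in[a+1/n,1]$ (in which case $y$ ranges over $[a/x,(a+1/n)/x]$), but since $g$ is supported in $[0,1]$, both pieces can be written uniformly as
\begin{equation*}
\mu([a,a+1/n]) \;=\; \int_a^1 \int_{a/x}^{(a+1/n)/x} g(y)g(x)\,dy\,dx.
\end{equation*}

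The main step is then to divide by $1/n$, take $n\to\infty$, and pass the limit under the outer integral using dominated convergence. For the integrand, I would rewrite
\begin{equation*}
\frac{1}{1/n}\int_{a/x}^{(a+1/n)/x} g(y)\,dy \;=\; \frac{1}{x}\cdot\frac{1}{1/(nx)}\int_{a/x}^{a/x+1/(nx)} g(y)\,dy,
\end{equation*}
and invoke the Lebesgue differentiation theorem, which asserts that for a.e.\ $x$ the inner average converges to $g(a/x)$. To justify dominated convergence for the outer integral, I can use the pointwise bound $\frac{1}{1/n}\int_{a/x}^{(a+1/n)/x} g(y)\,dy \le \frac{1}{x}\,Mg(a/x)$, where $Mg$ is the Hardy--Littlewood maximal function; since $g\in L^1$, $Mg$ is finite a.e., but one needs a genuinely integrable majorant on $[a,1]$. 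The cleanest route I expect to take is to note that $G(t):=\int_0^t g$ is absolutely continuous, so the difference quotient $n[G((a+1/n)/x)-G(a/x)]$ is bounded by the (uniform) modulus-of-continuity style bound $\tfrac{1}{x}\|g\|_{L^1}$ times an $O(1)$ factor, giving the dominating function $\tfrac{g(x)}{x}\|g\|_{L^1}$, which is integrable on $[a,1]$ since $1/x$ is bounded there. Applying dominated convergence yields
\begin{equation*}
f(a) \;=\; \int_a^1 g\!\left(\tfrac{a}{x}\right)g(x)\,\tfrac{1}{x}\,dx \qquad \text{a.e.,}
\end{equation*}
which is the claimed formula. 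The anticipated obstacle is the dominated-convergence step, since the naive majorant is $\infty$ at $x=a$; this is resolved by the bound above because $1/x\le 1/a$ on the interval of integration.
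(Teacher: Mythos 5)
You follow the same route as the paper: absolute continuity of $\mu$ via the area estimate on hyperbolic slices, then the de Possel limit $\lim_n n\,\mu([a,a+1/n])$ with a dominated convergence argument to move the limit through the outer integral. You are right to single out the dominated convergence step as the delicate one (the paper dismisses it with the remark that $\int g(y)\,dy$ is continuous), but the majorant you propose does not exist for general $g\in L^1$. Absolute continuity of $G(t)=\int_0^t g$ does not make the difference quotients $n\,[G((a+1/n)/x)-G(a/x)]$ bounded by a constant multiple of $1/x$; they are unbounded in $n$ wherever $g$ has a non-essentially-bounded singularity. Concretely, take $g(y)=c_0\,|y-\sqrt{a}|^{-1/2}$ (normalized; this is in $L^1([0,1])$, and $\sqrt{a}\in(a,1)$). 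For $x$ near $\sqrt{a}$ one finds $\sup_n n\int_{a/x}^{(a+1/n)/x}g \asymp \frac{1}{x}\,|x-\sqrt{a}|^{-1/2}$, so after multiplying by $g(x)$ the best possible majorant is comparable to $|x-\sqrt{a}|^{-1}$, which is not integrable on $[a,1]$; indeed for this $g$ the limit integral $\int_a^1 g(a/x)\,g(x)\,x^{-1}\,dx$ is itself $+\infty$ at this particular $a$, so no dominated convergence argument can succeed pointwise in $a$. The Hardy--Littlewood alternative has the same defect, since $Mg$ need not lie in $L^1$ when $g$ does, and $\frac{1}{x}Mg(a/x)g(x)$ need not be integrable.

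The gap is closed most cheaply by avoiding differentiation altogether: for any Borel set $E\subseteq[0,1]$, Tonelli and the substitution $u=xy$ in the inner integral give
$\mu(E)=\iint_{\{xy\in E\}}g(x)g(y)\,dy\,dx=\int_E\bigl(\int_u^1 g(u/x)\,g(x)\,x^{-1}\,dx\bigr)\,du$,
which identifies the density of $\mu$ as the stated $f$ directly, shows $f\in L^1$ (hence finite a.e.), and renders both the hyperbolic-slice estimate and the limit interchange unnecessary. As written, your argument reproduces the paper's derivation together with its unfilled hole, and the specific patch you offer for that hole is not valid.
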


\begin{remark}
For future use, we note that, since $g$ vanishes outside $[0,1]$, one has
\begin{equation} \int_{a}^1 g(\frac{a}{x}) g(x) \frac{1}{x} \, dx = \int_{0}^1 g(\frac{a}{x}) g(x) \frac{1}{x} \, dx .
\end{equation}
This displays the integral as a convolution $g * g$ with respect to Haar measure on the multiplicative semigroup $(0,1)$;  we will consider this more fully in Section \ref{section5}.
\end{remark}

A first consequence of (\ref{neweq}) is a concrete expression for $f$ if $g$ is a polynomial;  the proof is merely a computation.

\begin{theorem}  \label{th:polysquare}
Suppose that $g$ is a polynomial, say $g(x) \equiv \sum_{i=0}^n a_i x^i$, positive on $[0,1]$ and inducing a probability measure $d \nu(t) = g(t) \chi_{(0,1)}(t) dt$. \ Then $\mu = \nu^2$ is absolutely continuous with respect to Lebesgue measure and with Radon-Nikodym derivative $f \cdot \chi_{(0,1)}$ where $f$ is given by
\begin{eqnarray*}
f(x) &=& \sum_{i=0}^{n-1} \left(\left(\frac{1-x^{i+1}}{i+1}\right) \sum_{j=0}^{n-i-1} a_j a_{j+i+1} x^j \right) +  \\
     && -\ln x \cdot \sum_{i=0}^{n} a_i^2 x^i \\
     &+&\sum_{i=-n-1}^{-2} \left(\left(\frac{1-x^{i+1}}{i+1}\right) \sum_{j=-i-1}^{n} a_j a_{j+i+1} x^j \right) .\\
\end{eqnarray*}
\end{theorem}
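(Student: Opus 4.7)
The plan is straightforward: apply the integral formula $f(a) = \int_a^1 g(a/x) g(x)\, \frac{dx}{x}$ from the preceding proposition (absolute continuity of $\mu$ having been established in the discussion just before it) and carry out the computation explicitly when $g(x) = \sum_{i=0}^n a_i x^i$. Substituting yields
$$g(a/x) g(x)\, \frac{1}{x} = \sum_{i=0}^n \sum_{j=0}^n a_i a_j\, a^j\, x^{i-j-1},$$
a finite double sum, so I can freely interchange summation and integration to obtain $f(a) = \sum_{i,j=0}^n a_i a_j a^j \int_a^1 x^{i-j-1}\, dx$.

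The inner integral splits by cases. When $i = j$ one has $\int_a^1 x^{-1}\, dx = -\ln a$, which produces the middle term $-\ln a\cdot \sum_{i=0}^n a_i^2 a^i$ in the stated formula. When $i \neq j$ one has $\int_a^1 x^{i-j-1}\,dx = (1 - a^{i-j})/(i-j)$. It then remains only to reindex the off-diagonal sum to match the stated form. For the contribution with $i > j$, I would set $k = i - j \in \{1, \dots, n\}$ with $j \in \{0, \dots, n-k\}$; this produces $\sum_{k=1}^n \frac{1-a^k}{k} \sum_{j=0}^{n-k} a_j a_{j+k}\, a^j$, which under the substitution $k = i + 1$ is exactly the first sum of the statement. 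For the contribution with $i < j$, I would set $k = i - j \in \{-n, \dots, -1\}$ with $j \in \{-k, \dots, n\}$; this produces $\sum_{k=-n}^{-1} \frac{1-a^k}{k} \sum_{j=-k}^n a_j a_{j+k}\, a^j$, and the substitution $k = i+1$ (so that $i$ ranges from $-n-1$ to $-2$ and $-k = -i-1$) turns this into the third sum of the statement.

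The calculation is essentially routine; the only real care required is in the reindexing of the $i < j$ piece, where one must track the signs of $k$ and the index bounds so that the factor $(1-a^k)/k$ (with $k < 0$) combines with $a^j$ to give a locally integrable expression on $(0,1)$. This works out because $a^j \cdot a^k = a^{j+k} = a^i$ has a nonnegative exponent once $j \geq -k$, so each bracketed inner summand is a polynomial in $a$. No separate normalization check is needed, since $\mu = \nu^2$ is already a probability measure, so the derived $f$ automatically integrates to $1$. I do not anticipate any substantive obstacle beyond this bookkeeping.
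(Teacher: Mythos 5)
Your proposal is correct and is exactly the computation the paper intends: the paper gives no written proof beyond remarking that the result "is merely a computation" from the formula $f(a)=\int_a^1 g(a/x)g(x)\,\frac{dx}{x}$ of the preceding proposition, and your expansion, case split on $i=j$ versus $i\neq j$, and reindexing via $k=i-j=i'+1$ reproduce the stated three-term formula precisely.
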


There are various consequences of this that seem (at least to us) somewhat surprising;  here, given the simple nature of $1 \, dt$, is one.

\begin{corollary}    \label{cor:1dtsqis}
Let $\nu$ be the measure on $[0,1]$ given by $d \nu(t) = 1 \,dt$; i.e., $\nu$ is Lebesgue measure on $[0,1]$. \ Then $\nu^2$ is given by $-\ln t \,dt$.
In particular, $(1 \, dt)^2$ is singular at the origin.
\end{corollary}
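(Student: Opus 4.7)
The plan is to apply the proposition immediately preceding the corollary (the formula $f(a) = \int_a^1 g(a/x) g(x) \frac{1}{x}\,dx$) with the particular choice $g \equiv 1$ on $[0,1]$. Since $\nu$ is Lebesgue measure, its Radon--Nikodym derivative is the constant function $g(x) = 1$, and $\nu$ is indeed a probability measure on $[0,1]$.

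First, I would verify the hypotheses: $g = 1 \in L^1([0,1])$, and the proposition applies, so $\mu = \nu^2$ is absolutely continuous with Radon--Nikodym derivative
\[
f(a) = \int_a^1 g\!\left(\tfrac{a}{x}\right) g(x) \tfrac{1}{x}\,dx.
\]
Substituting $g \equiv 1$, the integrand becomes simply $1/x$ (noting that for $a < x \le 1$ we have $0 < a/x < 1$, so $g(a/x) = 1$), giving
\[
f(a) = \int_a^1 \tfrac{1}{x}\,dx = -\ln a, \qquad 0 < a < 1.
\]
Alternatively, this could be read off from Theorem \ref{th:polysquare} by setting $n = 0$ and $a_0 = 1$: the first and third sums are empty, and the middle term reduces to $-\ln x \cdot 1$.

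For the second assertion, observe that $-\ln t \to +\infty$ as $t \to 0^+$, so the density blows up at the origin; this is what is meant by "singular at the origin" (the density is unbounded there, in contrast with the bounded density $g \equiv 1$ of $\nu$). It is worth remarking that $-\ln t\,dt$ is nonetheless a probability measure on $[0,1]$, since $\int_0^1 -\ln t\, dt = 1$, which is a consistency check against $\mu(\lbrack 0,1\rbrack) = \nu(\lbrack 0,1\rbrack)^2 = 1$.

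There is no real obstacle here; the only point requiring care is the range of integration, and specifically noting that $g(a/x) = 1$ throughout the interval of integration (which is automatic since $a < x \le 1$ implies $a/x \in (0,1)$). The conceptual content — that the very simple measure $1\,dt$ squares to something with a logarithmic singularity — is the surprising feature highlighted by the authors, but the verification itself is a one-line integral.
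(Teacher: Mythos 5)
Your proof is correct and follows the paper's own route: the corollary is exactly the $n=0$, $a_0=1$ case of Theorem \ref{th:polysquare}, equivalently the direct evaluation $\int_a^1 \frac{1}{x}\,dx = -\ln a$ from the preceding proposition. Your reading of ``singular at the origin'' as unboundedness of the density, and the normalization check $\int_0^1(-\ln t)\,dt=1$, are both consistent with the paper's intent.
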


\begin{remark}
We also observe that the square of any polynomial measure is singular at the origin and vanishes at $1$. \ (Hindsight in comparing the areas of $p^{-1}$ applied to a small interval near $0$ and a small interval near $1$ renders this plausible.) \  As well, of course, the square root of $1 \,dt$ cannot possibly be a polynomial measure, which makes it unlikely that it arises even from a function continuous on $[0,1]$. \ Some numerical experiments using \textit{Mathematica} \cite{Wol} suggest that what is needed is some function zero at $0$ and with a singularity at $1$.
\end{remark}

We pause to record a curiosity;  the only information below not available merely from moment computations is the Berger measure associated with the square of $1 \, dt$.

\begin{corollary}
The Aluthge transform of the weighted shift associated with the Berger measure $-\ln t \, dt$ is $A_3$, the third Agler shift;  that is,
$$AT(A_2^{(2)}) = A_3.$$
\end{corollary}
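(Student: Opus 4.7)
The plan is to verify the identity $AT(A_2^{(2)}) = A_3$ by direct computation of weight sequences. Since both sides are weighted shifts and a weighted shift is determined up to unitary equivalence by its weight sequence, it suffices to show that the weights agree term by term.

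First I would record the formula for the Aluthge transform of a weighted shift. If $W_\alpha$ has weights $\alpha_n$, its polar decomposition is $W_\alpha = U|W_\alpha|$ where $U e_j = e_{j+1}$ and $|W_\alpha| e_j = \alpha_j e_j$. A one-line computation then gives
\[
AT(W_\alpha) e_j = |W_\alpha|^{1/2} U |W_\alpha|^{1/2} e_j = \sqrt{\alpha_j \alpha_{j+1}}\, e_{j+1},
\]
so the Aluthge transform of $W_\alpha$ is the weighted shift with weights $\sqrt{\alpha_j \alpha_{j+1}}$.

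Next I would compute. The Bergman shift $A_2$ has weights $\alpha_n = \sqrt{(n+1)/(n+2)}$, so its square $A_2^{(2)}$ has weights $\alpha_n^2 = (n+1)/(n+2)$. Applying the Aluthge formula to these weights gives
\[
\sqrt{\alpha_n^2 \, \alpha_{n+1}^2} \;=\; \alpha_n \, \alpha_{n+1} \;=\; \sqrt{\tfrac{n+1}{n+2}}\cdot \sqrt{\tfrac{n+2}{n+3}} \;=\; \sqrt{\tfrac{n+1}{n+3}},
\]
which is precisely the weight sequence $\alpha^{(3)}$ of the third Agler shift $A_3$. This identifies the two operators.

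There is really no serious obstacle here: the identity is essentially a cancellation of the intermediate factor $\sqrt{n+2}$ when the weights of the Bergman shift are squared and then geometrically averaged by the Aluthge transform. As the preceding remark emphasizes, the nontrivial content of the corollary is not this operator equality (which is a routine weight computation) but the underlying measure-theoretic identification that $A_2^{(2)}$ has Berger measure $-\ln t\,dt$ (Corollary \ref{cor:1dtsqis}); combined with the known Berger measure $2(1-t)\,dt$ of $A_3$, this produces the chain of measures associated with the transformations $\nu \mapsto \nu^2 \mapsto AT(\cdot)$ starting from Lebesgue measure.
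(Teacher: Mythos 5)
Your proof is correct and follows essentially the same route as the paper, whose one-line proof simply says ``compute moments'' (your term-by-term weight computation via $AT(W_\alpha)e_j=\sqrt{\alpha_j\alpha_{j+1}}\,e_{j+1}$ is the equivalent direct verification). You also correctly flag that the only nontrivial input is the earlier identification of $-\ln t\,dt$ as the Berger measure of $A_2^{(2)}$, which matches the paper's framing.
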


\begin{proof}
Compute moments, or note that since $-\ln t \, dt = (1 \, dt)^2$, $1 \, dt = \sqrt{-\ln t \, dt}$ and use the relationship between the Aluthge transform and the square root.
\end{proof}

Note that the possible generalization of this tidy relationship to other Agler shifts $A_j$ does not hold.

Returning to the general computation, the other form of $g$ for which it is clear that the integral computation is tractable is $g$ a sum of monomials in $x^r$ with $r > -1$;  we leave the computation of the resulting $f$ to the interested reader.

\section{Enter the Laplace Transform} \label{section4}

We are indebted to Ameer Athavale for both the following result and for the method by which it is proved (\cite{At});  the proof of a more general result appears in Theorem \ref{th:pthpow1dt}.

\begin{theorem}  \label{th:squarertof1dt}
The square root measure for $1 \, dt$ (on $[0,1]$) is $\frac{1}{\sqrt{\pi}} (-\ln t)^{(-1/2)} \, dt.$
\end{theorem}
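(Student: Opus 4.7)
The claim is a moment-matching identity: if $d\nu(t) := \frac{1}{\sqrt{\pi}}(-\ln t)^{-1/2}\,dt$ on $[0,1]$, then the relation $\mu = \nu^2$ of equation (\ref{eq:momentmatching}) should hold with $d\mu(t) = 1\,dt$. Since both $\mu$ and $\nu^2$ are probability measures on the compact interval $[0,1]$ (and are therefore determined by their moments via Hausdorff), it suffices to verify
\[
\left(\int_0^1 t^n \,d\nu(t)\right)^{2} \; = \; \int_0^1 t^n \, dt \; = \; \frac{1}{n+1}, \qquad n = 0, 1, 2, \ldots .
\]
So my plan is to compute $\int_0^1 t^n (-\ln t)^{-1/2}\,dt$ explicitly and observe that its square, after the normalization $1/\sqrt{\pi}$, is exactly $1/(n+1)$.

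The key step is the exponential substitution that converts the moment on $[0,1]$ into a Laplace transform on $[0,\infty)$. Put $u = -\ln t$, so $t = e^{-u}$ and $dt = -e^{-u}\,du$; the limits swap and I obtain
\[
\int_0^1 t^n (-\ln t)^{-1/2}\,dt \; = \; \int_0^\infty u^{-1/2}\, e^{-(n+1)u}\,du \; = \; \mathcal{L}\!\left[u^{-1/2}\right]\!(n+1).
\]
The Laplace transform of $u^{-1/2}$ is the classical evaluation $\sqrt{\pi/s}$ (easily obtained from $\Gamma(1/2) = \sqrt{\pi}$ by a further substitution $u = v/s$). Thus
\[
\int_0^1 t^n \,d\nu(t) \; = \; \frac{1}{\sqrt{\pi}} \cdot \sqrt{\frac{\pi}{n+1}} \; = \; \frac{1}{\sqrt{n+1}},
\]
whose square is $1/(n+1)$, matching the moments of Lebesgue measure. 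Taking $n=0$ shows simultaneously that $\nu$ is a probability measure, so the moment equalities for all $n \geq 0$ establish the theorem.

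The only obstacles are essentially bookkeeping: making sure the substitution is applied correctly and that the improper integral is interpreted in the usual sense (the integrand $u^{-1/2} e^{-(n+1)u}$ is integrable on $(0,\infty)$ for every $n \geq 0$, so no subtle convergence issue arises). More conceptually, the proof reveals the underlying mechanism behind Athavale's method: the logarithmic change of variable $t \mapsto -\ln t$ converts the multiplicative moment problem on $(0,1]$ into an additive one on $[0,\infty)$, where the Laplace transform provides a clean catalogue of candidate densities. This is precisely the structure that will be exploited in the more general Theorem~\ref{th:pthpow1dt} for arbitrary $p$-th powers of Lebesgue measure.
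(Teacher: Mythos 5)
Your proof is correct and uses essentially the same mechanism as the paper: the substitution $u=-\ln t$ turning the moment integral into the Laplace transform $\int_0^\infty u^{-1/2}e^{-(n+1)u}\,du=\sqrt{\pi/(n+1)}$, which the paper deploys (in the form of the inverse Laplace transform of $s^{-1/2}$) to prove the more general Theorem~\ref{th:pthpow1dt}. The only cosmetic difference is that you verify the given candidate directly rather than deriving it from a transform table, and you correctly invoke determinacy of the Hausdorff moment problem to conclude.
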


The key to this and allied results is the Laplace transform, and the movement of the moment problem from the interval $[0,1]$ to the interval $[0, \infty)$. \ Recall that the Laplace transform of a function $h$ is $H \equiv \mathcal{L}\{h\}$ where
$$
H(s) := \int_0^\infty e^{-s t} h(t) \, dt.
$$
Now suppose that $F := \mathcal{L}\{g\}$. \ Then
\begin{eqnarray*}
F(s+1) &=&  \mathcal{L}\{e^{-t} g(t)\}(s) \hspace{.2in}\mbox{\rm ``First Shifting Theorem''} \\
       &=&   \int_0^\infty e^{-s t} e^{-t} g(t) \, dt  \\
       &=&   \int_0^1 u^{s+1} \frac{1}{u} f(u) \, du  \hspace{.2in} (u := e^{-t}, f(u):= g(-\ln u) ) \\
       &=& \int_0^1 u^s f(u) \, du. \\
\end{eqnarray*}

Therefore we have the following result.

\begin{proposition}
Suppose that $\gamma(s) \equiv F(s+1)$ interpolates a sequence $(\gamma_n)$ we wish to ``match.'' \ Then the Inverse Laplace Transform, appropriately shifted to $[0,1]$, is the Radon-Nikodym derivative $f$ of the measure we seek. \ That is, $\gamma_n=\int_0^1 u^n f(u)\, du.$ 
\end{proposition}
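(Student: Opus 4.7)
The proposition is essentially a direct specialization of the Laplace-transform calculation displayed immediately above its statement. The plan is to read off the desired moment-matching identity by substituting the integer values $s = n$ into the general identity
$$F(s+1) = \int_0^1 u^s f(u)\,du,$$
which was just established there via the change of variable $u = e^{-t}$ and the definition $f(u) := g(-\ln u)$, with $g = \mathcal{L}^{-1}\{F\}$.

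In more detail, I would proceed in three short steps. First, starting from the assumption $\gamma(n) = \gamma_n$ with $\gamma(s) = F(s+1)$, invert the Laplace transform to recover $g$ on $[0,\infty)$ from $F$. Second, transport $g$ to the interval $(0,1]$ by the substitution $u = e^{-t}$, so that $t = -\ln u$ and $du = -e^{-t}\,dt = -u\,dt$; this produces the candidate density $f(u) := g(-\ln u)$, and the Jacobian factor $\tfrac{1}{u}$ is precisely what absorbs the extra $e^{-t}$ introduced by the First Shifting Theorem, reproducing the displayed chain
$$F(s+1) = \int_0^\infty e^{-(s+1)t} g(t)\,dt = \int_0^1 u^{s+1}\,\frac{1}{u}\,f(u)\,du = \int_0^1 u^s f(u)\,du.$$
Third, set $s = n$ and use $\gamma_n = F(n+1)$ to conclude $\gamma_n = \int_0^1 u^n f(u)\,du$, which is the stated conclusion.

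\textbf{Main obstacle.} The algebraic content is already in hand from the preceding computation, so the only substantive issue is to ensure that the inverse Laplace transform $g$ exists on $[0,\infty)$, that the resulting $f$ is nonnegative, and that $f(u)\,du$ is in fact a (finite, and ideally probability) measure on $[0,1]$; the probability normalization will follow from $\gamma_0 = 1$ once integrability is established. One must also verify that the abscissa of convergence for $F$ lies strictly to the left of $s = 1$, so that all the integer points $s+1 = 1, 2, 3, \ldots$ are covered by the Laplace integral representation and the formal identity is valid pointwise at $s = n$. In applications it is these analytic checks — rather than the identity itself — that will constitute the real work.
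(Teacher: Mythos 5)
Your proposal is correct and takes essentially the same route as the paper, which treats the proposition as an immediate consequence of the change-of-variables computation $F(s+1)=\int_0^1 u^s f(u)\,du$ displayed just before the statement, evaluated at $s=n$. The analytic caveats you flag (existence and nonnegativity of the inverse Laplace transform, convergence of the Laplace integral at the relevant abscissa) are sensible and are left implicit in the paper, where they are checked case by case in the applications.
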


As a consequence, we obtain the $q$-th power of $1 \, dt$ for any positive $q$, subsuming Athavale's result for the square root ($q = 1/2$) and also the earlier result for $(1 \, dt)^2$.

\begin{theorem}   \label{th:pthpow1dt}
The $q$-th power of $1 \, dt$ (on $[0,1]$)  for $q > 0$ is
$$f(u) \, du = \frac{1}{\Gamma(q)} (-\ln u)^{q-1} \, du$$
(where $\Gamma$ denotes the classical Gamma function). \ 
Alternatively, the Berger measure of the weighted shift whose moment sequence is  $\left(\sqrt[q]{\frac{1}{n+1}}\right)_{n=1}^\infty$ is \newline $\frac{1}{\Gamma(q)} (-\ln u)^{q-1} \, du.$

\end{theorem}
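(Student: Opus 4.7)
My strategy is to invoke the Laplace-transform proposition just established and to recognize the relevant $F(s)$ as an elementary transform. First I would recast the problem in moment form: since $1\,dt$ on $[0,1]$ has moments $\gamma_n = 1/(n+1)$ and the $q$-th power operation replaces these by $\gamma_n^q = 1/(n+1)^q$, the task reduces to exhibiting a probability density $f$ on $(0,1)$ with
\[
\int_0^1 u^n f(u)\,du = \frac{1}{(n+1)^q} \qquad (n=0,1,2,\ldots).
\]

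Next, the preceding proposition tells me to choose $g$ so that $F(s) := \mathcal{L}\{g\}(s) = 1/s^q$, because then $\gamma(s) = F(s+1) = 1/(s+1)^q$ interpolates precisely the desired moment sequence. At this point I would invoke the classical identity $\mathcal{L}\{t^{q-1}\}(s) = \Gamma(q)/s^q$, valid for $q > 0$, to conclude $g(t) = t^{q-1}/\Gamma(q)$. Pulling back along $u = e^{-t}$ exactly as in the derivation of the proposition then delivers
\[
f(u) = g(-\ln u) = \frac{(-\ln u)^{q-1}}{\Gamma(q)},
\]
which is the claimed density.

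To close the argument I would perform a direct moment check by substituting $t = -\ln u$ in $\int_0^1 u^n f(u)\,du$, obtaining $\Gamma(q)^{-1}\int_0^\infty t^{q-1} e^{-(n+1)t}\,dt = 1/(n+1)^q$; the case $n=0$ simultaneously verifies that $f$ integrates to $1$. Uniqueness of the Hausdorff moment problem on $[0,1]$ then identifies $f(u)\,du$ as \emph{the} Berger measure (subnormality of the $q$-th power shift being already known from Theorem 1.1 with $j=2$, or, more directly, from the very representation we have just produced). I expect no substantive obstacle; the only minor care points are integrability of $(-\ln u)^{q-1}$ near $u=1$, which translates under $t = -\ln u$ into integrability of $t^{q-1}$ near $t=0$ and holds precisely because $q>0$, and the standard justification of the Laplace inversion, which is routine. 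The hardest conceptual step is really the one already done in the preceding proposition: seeing that the exponential substitution converts the $[0,1]$ Hausdorff moment problem into a Laplace-transform problem on $[0,\infty)$.
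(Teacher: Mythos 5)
Your proposal is correct and follows essentially the same route as the paper: identify the target interpolating function $1/(s+1)^q$, use the classical Laplace pair $\mathcal{L}\{t^{q-1}/\Gamma(q)\}(s)=1/s^q$, and transfer to $[0,1]$ via $u=e^{-t}$. The explicit moment verification and the integrability remark near $u=1$ are welcome additions but do not change the argument.
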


\begin{proof}
This follows from  the Laplace transform result that for any $p >0$,
$$ \frac{1}{s^p} \, \leftrightarrow \, \mathcal{L}\{\frac{1}{\Gamma(p)} t^{p-1}\}$$
and the process indicated above to move the result to $[0,1]$.
\end{proof}

One obtains some odd results by this route. \ 

\begin{example}
Given a constant $c>0$, is
 $$\gamma_n = \frac{e^{-c/\sqrt{(n+1)}}}{e^{-c}}$$
 the moment sequence of a subnormal shift (a ``Hausdorff moment sequence'')? \ Showing that the function $f$ given by
 $$f(x) =  \frac{e^{-c/\sqrt{(x+1)}}}{e^{-c}}$$
 is completely monotone by examining signs of the derivatives is computationally infeasible. \ It seems equally impossible to verify that the moment matrices are positive to check $k$-hyponormality for all $k$. \ But the answer is ``yes,'' because $e^{-c/\sqrt{s}}$ is the Laplace transform of
$$\frac{c}{2\sqrt{\pi t^3}} e^{-c^2/{4t}}.$$
\end{example}

Also, while every Laplace transform table entry is potentially interesting, the results need not be very friendly.
\ For example, for the square root of $A_3$ (corresponding to the measure $2(1-t) \, dt$), with the square root having moments $\frac{\sqrt{2}}{\sqrt{n+1}\sqrt{n+2}}$, the appropriate measure involves the ``modified Bessel function of the first kind of order $0$'' and is
\begin{equation}  \label{eqsqrtA3}
\sqrt{2(1-t) \, dt} = \sqrt{2}e^{(\ln u)/2}\sum_{m=0}^\infty \frac{(\ln u)/2)^{2m}}{2^{2m} (m!)^2} \, du.
\end{equation}
For the remaining Agler shifts $A_j \; (j \ge 4)$, we have found no tables that provide any help when trying to find the square roots of the Berger measures $\mu^{(j)}(t) = (j-1) (1-t)^{j-2}$.

Note that the relationship here is not between Laplace transforms and moment sequences but between Laplace transforms and \underline{functions} that interpolate moment sequences. \ This is conceptually right as shown by a result of Hausdorff-Bernstein-Widder  (see \cite{Wi}):

 \begin{theorem}
 A function is completely monotone if and only if it is the Laplace transform of a {measure}.
 \end{theorem}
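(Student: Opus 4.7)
\medskip

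\noindent\textbf{Proof proposal.} \ The easy direction is a differentiation under the integral sign. If $f(s) = \int_0^\infty e^{-st}\,d\mu(t)$ for a positive Borel measure $\mu$ on $[0,\infty)$, then on each compact subinterval $[s_0,s_1]\subset(0,\infty)$ the integrands $(-t)^n e^{-st}$ are uniformly dominated by an integrable function (namely $t^n e^{-s_0 t}$ times a constant), so differentiation termwise yields $f^{(n)}(s) = \int_0^\infty (-t)^n e^{-st}\,d\mu(t)$. Hence $(-1)^n f^{(n)}(s)\ge 0$ for all $n\ge 0$ and $s>0$.

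For the converse, the plan is to reduce to the Hausdorff moment problem on $[0,1]$ via the substitution $u = e^{-t}$ already used in Section~\ref{section4}. First I would record two standard preparatory facts: a completely monotone function on $(0,\infty)$ is real-analytic and extends analytically to the right half-plane $\{\Re s > 0\}$, and continuous complete monotonicity descends to discrete complete monotonicity by iterating the mean value theorem, so that the finite differences $(-1)^k(\Delta^k f)(x)$ are non-negative whenever $(-1)^k f^{(k)}\ge 0$ on the relevant interval. Consequently, for any fixed $c>0$, the sequence $\gamma_n := f(n+c)$ is a Hausdorff moment sequence in the discrete sense.

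Next I would invoke the Hausdorff moment theorem to produce a positive finite Borel measure $\tilde\mu_c$ on $[0,1]$ with $\gamma_n = \int_0^1 u^n\,d\tilde\mu_c(u)$. Pushing $\tilde\mu_c$ forward under $u \mapsto -\ln u$ and setting $d\mu(t) := e^{ct}$ times this pushforward converts the moment identity, via $u = e^{-t}$, into
\begin{equation*}
f(n+c) = \int_0^\infty e^{-(n+c)t}\,d\mu(t) \qquad (n = 0,1,2,\ldots).
\end{equation*}
Comparing two choices of $c$ on the overlap of their arithmetic progressions, and invoking the uniqueness clause of Hausdorff's theorem, I would then argue that $\mu$ is independent of $c$. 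Finally, the identity $f(s) = \int_0^\infty e^{-st}\,d\mu(t)$ would be promoted from the discrete set $\{n+c:n\in\mathbb{N}\}$ to all $s$ with $\Re s > 0$ by the identity theorem applied to the two analytic functions on the right half-plane.

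The principal obstacle is the last step: one must verify that the measure $\mu$ obtained from Hausdorff's theorem is not too concentrated near the endpoint $u=0$ of $[0,1]$ (equivalently, that $t \mapsto e^{ct}$ is $\mu_c$-integrable), so that the Laplace transform $\int e^{-st}\,d\mu(t)$ converges and is analytic on a suitable half-plane; and then uniqueness of the Laplace--Stieltjes transform rules out competing representations. Both issues are handled by using the normalization built into $\gamma_0 = f(c) < \infty$ together with the analyticity already noted.
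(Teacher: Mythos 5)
The paper does not actually prove this statement: it is quoted as the classical Hausdorff--Bernstein--Widder theorem with a citation to \cite{Wi}, so there is no internal argument to compare against. Your forward direction (differentiation under the integral sign, with the domination $t^n e^{-st}\le C e^{-s_0' t}$ on compacta) is correct. Your overall strategy for the converse --- pass to the completely monotone sequence $f(n+c)$, apply the Hausdorff moment theorem on $[0,1]$, and transport the representing measure to $[0,\infty)$ via $u=e^{-t}$ --- is indeed one of the standard routes and meshes well with the change of variables the paper uses in Section~\ref{section4}.

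However, the final ``promotion'' step as you describe it fails, and it is the crux of the theorem. The set $\{n+c:n\in\mathbb{N}\}$ is discrete and accumulates only at infinity, so it has no accumulation point in the right half-plane; the identity theorem therefore gives nothing from agreement of $f$ and $s\mapsto\int_0^\infty e^{-st}\,d\mu(t)$ on such a set (for instance, $\sin(\pi(s-c))$ vanishes there without vanishing identically). Relatedly, your plan to show $\mu_c$ is independent of $c$ by comparing ``on the overlap of the arithmetic progressions'' only makes sense when $c-c'\in\mathbb{Z}$; otherwise $\{n+c\}$ and $\{m+c'\}$ are disjoint. The standard repairs are: (a) refine the lattice --- for each step $h>0$ apply Hausdorff to $f(\delta+nh)$ to get a measure whose Laplace transform matches $f$ on $\delta+h\mathbb{N}$, then let $h\to 0$ using Helly/weak-$*$ compactness of the uniformly bounded family of measures together with continuity of $f$ to obtain one measure representing $f$ on all of $(\delta,\infty)$, and finally let $\delta\to 0$; or (b) keep the analytic-continuation idea but replace the identity theorem by Carlson's theorem, which does exclude a nonzero difference vanishing on $c+\mathbb{N}$ --- at the cost of actually proving the required growth bound for the analytic extension of $f$ on half-planes $\Re s\ge\epsilon$, which the radius-of-convergence estimate alone does not supply. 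Either way, this compactness/growth input is the real content of the hard direction and is missing from your sketch. A smaller loose end: the Hausdorff measure $\tilde\mu_c$ may carry an atom at $u=0$ (mass at $t=+\infty$ after the substitution), which contributes to $\gamma_0$ but to no higher moment and must be argued away before the pushforward to $[0,\infty)$ is legitimate.
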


Note also that if we take the fundamental relationship
$$\gamma_n = \int_0^1 t^n \, d\mu(t)$$
and turn it into
$$\gamma(s) = \int_0^1 t^s \, d\mu(t)$$
the resulting function $\gamma$ is completely monotone (if perhaps not readily computable) as seen by moving $s$ derivatives inside the integral.

The Laplace transform approach produces some measures, but seems limited to those results arising more or less directly from tables;  in particular, we have found no measure that can be ``computed'' by setting up a (novel) target completely monotone function and applying an inverse Laplace Transform.

\section{Fourier Transform Approaches} \label{section5}

As noted above the relationship
$$
f(a) =  \int_{a}^1 g(\frac{a}{x}) g(x) \frac{1}{x} \, dx = \int_{0}^1 g(\frac{a}{x}) g(x) \frac{1}{x} \, dx
$$
between the Radon-Nikodym derivatives of a measure and its square may be regarded as the convolution of $g$ with itself with respect to Haar measure on the multiplicative semi-group $(0,1)$. \ This is consistent with Theorem \ref{th:backstepmeas}; in fact, the result in Theorem \ref{th:backstepmeas} may be reinterpreted as saying that the back step extension of a shift corresponding to $g \, dt$ is subnormal if and only if $g \in \mathcal{L}^1((0,1), \mbox{\rm dHaar})$. \ As well, in this setting there is a Fourier transform approach (which includes the transform of a measure), with the Fourier transform of the convolution being the product of the Fourier transforms. \ This looks potentially useful for the Square and Square Root Problems. \ However, there are arguments against taking this point of view. \ First, harmonic analysis on semigroups is not primarily designed to be computational, and we are interested in concrete expressions for measures. \ Second, results often assume that some function is in $\mathcal{L}^1$, and many of the functions we are particularly interested in are not (for example, ``$1$''). \ Luckily, by moving the problem to $[0, \infty)$ by the change of variables $t = - \ln x$ as we did in the case of the Laplace transform, we may have something of the best of both worlds.

We must set some notation, and in particular we will revert to indicating the domains of functions explicitly, writing, for example, not ``$ 1 \, dt$'' but ``$1 \chi_{[0,1]} \, dt$''. \ We will as well have considerable use for the Heaviside function, denoted $H$, given by
$$
H(x)=\left\{
\begin{array}{rr}
1, & x \geq 0, \\
0, & x<0 ,
\end{array}%
\right. $$
and occasional use for the Signum function Sgn,
$$
\mbox{\rm Sgn}(x)=\left\{
\begin{array}{rr}
1, & x > 0, \\
0, & x = 0, \\
-1, & x<0.
\end{array}%
\right. $$
There will be two convolutions in play, and we reserve ``$*$'' for the ordinary (additive) convolution on $\mathbb{R}$ and use ``$*_I$'' for the multiplicative convolution on $(0,1)$ which appeared in Section \ref{section3}. \ With the domains $(0,1)$ and $\mathbb{R}$ both in play, we will reserve ``$t$'' for the input variable in $(0,1)$ and ``$x$'' for the input variable in $\mathbb{R}$;  the relationship will always be that of the underlying change of variables $t = e^{-x}$ or equivalently $x = - \ln t$. \ To codify this change of variable at the level of functions, for $f$ defined on $(0,\infty)$ let $L(f)$ be the function defined by $L(f)(t) = f(- \ln t)$. \ For $f$ defined on $(0,1)$, let $E(f)$ be the function given by $E(f)(x) = f(e^{-x})$. \ It is elementary to check that the changes of variables back and forth result in
$$h = f*_I g \Leftrightarrow E(h) = E(f) * E(g),\hspace{.2in} f,g,h \, \, \mbox{\rm on } (0,1),$$
and
$$h = f*g \Leftrightarrow L(h) = L(f) *_I L(g),\hspace{.2in} f,g,h \, \, \mbox{\rm on } (0,\infty).$$
It is equally simple to check that $f \cdot \chi_{(0,1)}$ is transformed to $E(f) \cdot H$ and $f \cdot H$ to $L(f) \cdot \chi_{(0,1)}$ under the appropriate changes of variable. \ The upshot of all of this is that if we wish to solve (in either direction)
$$f \chi_{(0,1)} = (g \chi_{(0,1)}) *_I (g \chi_{(0,1)})$$
it is equivalent to solve some related equation
$$f \cdot H = (g \cdot H) * (g \cdot H)$$
in the familiar setting of $\mathbb{R}$ and under the appropriate changes of variable using $L$ and $E$.

The method to achieve some such solutions will be the use of the Fourier transform. \ Recall that for a function $f$ in $L^1(\mathbb{R})$ its Fourier transform is the function $\hat{f}$ defined by
$$\hat{f}(s) = \int_{-\infty}^\infty f(x)e^{-2 \pi i x s} \, dx.$$
Recall also that the Fourier transform is routinely extended to map from the class of ``tempered distributions'' to itself. \ This class includes objects such as the Dirac delta at $x$ (denoted $\delta_x$) and ``$\frac{1}{x}$'' (where this object is a tempered distribution associated with the function $1/x$, but not equal to it). \ A useful reference for various aspects of the theory is \cite{Ch}; the most extensive printed Fourier transform tables source of which we are aware is \cite{Er}. \ It is well known that
$$\widehat{f *g} = \hat{f} \hat{g}.$$

Before continuing we  must digress at some length to point out some subtleties that are not at first apparent (or, at least, were not to us). \ We can exhibit the difficulty by attempting to check the result in Theorem \ref{th:squarertof1dt} that
$\sqrt{(1 \chi_{(0,1)} \, dt)} = \frac{1}{\sqrt{\pi}}(-\ln t)^{-1/2} \chi_{[0,1]} \, dt$ via a Fourier transform approach. \ We must change domains, take the Fourier transforms, square the right hand side, and compare, yielding
\begin{eqnarray*}
 1 \chi_{(0,1)} \, \, \mbox{\rm on $(0,1)$} &\rightarrow& H(x)\, \,  \mbox{\rm on $(0,\infty)$}  \\
                                        &{\rightarrow}& \left(\frac{1}{2}\right) \left(\delta_0(s) -\frac{1}{i \pi s}\right)   \, \,  \hspace*{.2in} \mbox{\rm (in transform space)}\\
\end{eqnarray*}
and
\begin{eqnarray*}
  \frac{1}{\sqrt{\pi}}(-\ln t)^{-1/2} \chi_{[0,1]} \, \, \mbox{\rm on $(0,1)$} &\rightarrow& \frac{1}{\sqrt{\pi}} \frac{1}{(-\ln(e^{-x}))^{(1/2)}} H(x)\, \,  \mbox{\rm on $(0,\infty)$}  \\
                                        &=& \frac{1}{\sqrt{\pi}} \frac{1}{\sqrt{x}} H(x)\, \,  \mbox{\rm on $(0,\infty)$}  \\
                                        &{\rightarrow}& \left(\frac{1}{2\sqrt{\pi} |s|^{1/2}}\right) \left(1 - i \mbox{\rm Sgn}(s)\right) \\
                                        && \, \,  \hspace*{.2in} \mbox{\rm (in transform space),} \\
\end{eqnarray*}
leaving us to compare
$$
\left(\frac{1}{2}\right) \left(\delta_0(s) -\frac{1}{i \pi s}\right)
$$
with
$$
\left(\frac{1}{2\sqrt{\pi} |s|^{1/2}}\right)^2 \left(1 - i \mbox{\rm Sgn}(s)\right)^2  
= \frac{-i \mbox{\rm Sgn}(s)}{2 \pi |s|}  + \left(\frac{1}{4 \pi |s|}\right) (1 - (\mbox{\rm Sgn}(s))^2) .  
$$
This presents us with the possible identity
$$\frac{\delta_0(s)}{2} \, = \, \left(\frac{1}{4 \pi |s|}\right) (1 - (\mbox{\rm Sgn}(s))^2),$$
and we can find no interpretation of the right hand side that renders it meaningful, even as a tempered distribution. \ (For example, one might try to approximate tempered distributions by limits of ordinary functions -- see \cite[page 127 f.f.]{Ch} -- but this does not seem to yield anything useful.)

On the other hand, it is possible to check that 
$$2t \chi_{(0,1)} \, dt = \left(\frac{\sqrt{2}}{\sqrt{\pi}}t(-\ln t)^{-1/2} \chi_{[0,1]} \, dt\right)^2$$ 
(which follows from Theorem \ref{th:squarertof1dt} and Proposition \ref{prop:restfollow}) in this way. \ (Neither we nor \textit{Mathematica} \cite{Wol} can do this in the ``square root'' approach -- by taking the square root of the Fourier transform of the left hand side and finding the Inverse Fourier transform -- but it can be done in the direction in which we square a Fourier transform and compare.)

What has gone wrong in one case but not the other?  In the first place, it is well-known that the product of two tempered distribution need not be a tempered distribution, with, as the standard example, $\delta_0^2$ not defined, and there are other anomalies (see \cite[page 402]{Ka}). \ Thus even the Square Problem takes us on to delicate ground if we depart from ordinary functions. \ A second difficulty is that if we wish to view things from the point of semi-algebras and convolutions, we must have functions in $\mathcal{L}^1$, and $1 \chi_{(0,1)} \, dt$ is not in $L^1((0,1), d\mbox{\rm Haar}) = L^1((0,1), \frac{1}{t}\, dt)$. \ Put differently, the computational result in Theorem \ref{th:polysquare} includes functions (e.g., $1 \chi_{(0,1)}$) which might not be suitable for transform methods.

What muddies matters still further, however, is the fact that sometimes things work anyway. \ Viewing $\delta_0(x)$ as the limit $\lim_{\epsilon \rightarrow 0} \frac{\epsilon}{\pi(x^2 + \epsilon^2)}$, and the tempered distribution $\frac{1}{x^2}$ as the limit $\lim_{\epsilon \rightarrow 0} \frac{x^2 -\epsilon^2}{(x^2 + \epsilon^2)}$  (cf. \cite[pages 127--130]{Ch}), one can check that $(1 \chi_{[0,1]}\, dt)^2 = -\ln t \chi_{[0,1]}\,dt$ via this approach. \ This works \emph{in spite of} the fact that, in squaring, we are, in one way or another, producing  $\delta_0^2$, which ought not even to be defined.

This state of affairs is hardly satisfactory, although surely one is on safe ground if what appears is always ordinary functions. \ In what follows we present results not sticking to this limitation but which, although discovered by Fourier transform methods, are proved by simple moment calculations once a candidate has been discovered. \ In short, at present the Fourier transform approach is heuristic at best: we use it to propose a solution, and once this is known, we must check it by comparing moments. \ Surely a more satisfactory resolution of these delicacies is desirable.

Let us finally point out to the reader wishing to experiment using \textit{Mathematica} \cite{Wol} that the program regards $\delta_0(a-b t)$ not as $\delta_{a/b}(t)$ but as $\frac{1}{b}\delta_{a/b}(t)$, and there are other anomalies that make considerable caution necessary.

The following ``recognition of a square'' was discovered by taking the Fourier transform, squaring, and taking the inverse
Fourier transform with the aid of \textit{Mathematica} \cite{Wol}.

\newpage
\begin{proposition}
Let $t_0$ be in $[0,1]$ and let $m$ be a non-negative integer. \ Then for $0 < \lambda < 1$,
\begin{eqnarray*}
(\lambda \delta_{t_0} &+& (1-\lambda) (m+1)t^m  \chi_{(0,1)}(t) \, dt)^2 \\
&=& \lambda^2 \delta_{2 t_0} + (1-\lambda)^2 (m+1)^2 (- \ln t) t^m  \chi_{(0,1)}(t)\, dt \\
&+& 2 \lambda (1-\lambda) (m+1)\frac{t^m}{t_0^{m+1}}  \chi_{(0,t_0)}(t)\, dt.
\end{eqnarray*}
\end{proposition}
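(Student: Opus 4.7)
The plan is to decompose $\nu := \lambda \delta_{t_0} + (1-\lambda)(m+1)t^m \chi_{(0,1)}(t)\,dt$ as $\nu = \nu_1 + \nu_2$, where $\nu_1 := \lambda \delta_{t_0}$ and $\nu_2 := (1-\lambda)(m+1) t^m \chi_{(0,1)}(t)\,dt$, and then use bilinearity of the product measure together with the pushforward description $\nu^2 = (\nu \times \nu)\circ p^{-1}$ from the Support Theorem (with $p(x,y)=xy$). Expanding,
$$\nu \times \nu = \nu_1 \times \nu_1 + \nu_1 \times \nu_2 + \nu_2 \times \nu_1 + \nu_2 \times \nu_2,$$
so that $\nu^2$ breaks into a sum of three terms (the two cross-products coinciding by symmetry), matching the three summands in the statement.

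The first piece, $(\nu_1 \times \nu_1)\circ p^{-1}$, is immediate from Proposition \ref{prop:squareptFA}: squaring the single atom $\lambda \delta_{t_0}$ produces a point mass of weight $\lambda^2$ at $t_0 \cdot t_0$, giving the atomic contribution. The second piece, $(\nu_2 \times \nu_2)\circ p^{-1}$, is the ``square'' of an absolutely continuous polynomial measure and is handled by Theorem \ref{th:polysquare} or, more efficiently, the convolution formula $f(a)=\int_a^1 g(a/x)g(x)(1/x)\,dx$. With $g(x)=(1-\lambda)(m+1)x^m$ this collapses to
$$f(a)=(1-\lambda)^2(m+1)^2\, a^m \int_a^1 \frac{dx}{x}= -(1-\lambda)^2(m+1)^2\, a^m \ln a,$$
which is exactly the middle term.

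For the cross term, I would test against an arbitrary bounded Borel function $\phi$:
$$\int \phi(xy)\,d\nu_1(x)\,d\nu_2(y) = \lambda(1-\lambda)(m+1)\int_0^1 \phi(t_0 y)\, y^m\,dy,$$
and then substitute $u = t_0 y$ (so $y=u/t_0$, $dy = du/t_0$, and the interval $(0,1)$ for $y$ becomes $(0,t_0)$ for $u$). This reveals the pushforward to have density $\lambda(1-\lambda)(m+1)\, u^m/t_0^{m+1}$ on $(0,t_0)$. Adding in the contribution from $\nu_2\times \nu_1$, which by the symmetry $p(x,y)=p(y,x)$ gives the same density, produces the factor of $2$ in the third term.

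The only real point requiring care — and the likeliest place to slip up — is the change-of-variable step in the cross term: one must track how the interval $(0,1)$ for $y$ transforms to $(0,t_0)$ for $u$, since this is precisely what produces both the $\chi_{(0,t_0)}$ support condition and the normalizing denominator $t_0^{m+1}$. Beyond that, the verification is a routine bilinear bookkeeping exercise, and no Fourier analysis is needed in the proof itself — the transform approach is needed only to discover the candidate formula in the first place.
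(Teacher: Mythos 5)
Your argument is correct, but it is not the paper's argument: the paper's entire proof is ``Compute moments,'' i.e., expand the square of the $n$-th moment of $\lambda\delta_{t_0}+(1-\lambda)(m+1)t^m\chi_{(0,1)}\,dt$, check that the three resulting terms are the $n$-th moments of the three summands on the right, and rely on determinacy of the Hausdorff moment problem. You instead compute the pushforward $(\nu\times\nu)\circ p^{-1}$ directly by bilinearity, which is a legitimate alternative (the paper's Section \ref{section2} identifies $\nu^2$ with exactly this pushforward) and has the advantage of explaining structurally where each of the three terms comes from: the diagonal atomic piece via Proposition \ref{prop:squareptFA}, the absolutely continuous piece via the convolution formula, and the cross term via the change of variables $u=t_0y$; your computations of all three are correct, including the appearance of $\chi_{(0,t_0)}$ and the factor $t_0^{m+1}$. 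One point you should make explicit rather than gloss over: your calculation places the atom at $t_0\cdot t_0=t_0^2$, whereas the statement as printed says $\delta_{2t_0}$. The two agree only if $t_0\in\{0,2\}$, and both Proposition \ref{prop:squareptFA} and the later example in which $(\delta_{e^{-1/5}})$ squares to an atom at $e^{-2/5}$ confirm that the correct location is $t_0^2$ (a moment check also forces $t_0^{2n}$, not $(2t_0)^n$); so the statement contains a typo, and you should say so instead of silently labeling your $\delta_{t_0^2}$ ``the atomic contribution.'' A last small caveat: the formula for the cross term degenerates when $t_0=0$ (the honest answer there is a point mass $2\lambda(1-\lambda)\delta_0$), so the endpoint case of $t_0\in[0,1]$ needs a separate sentence.
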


\begin{proof}
Compute moments.
\end{proof}

Observe that the above is true both for $t_0 \in (0,1)$ (for which convolution/transform methods are appropriate since this is a measure on $(0,1)$) and for $t_0 = 1$ or $t_0 = 0$ for which such methods are not apparently appropriate.

Noting that after the change of variables we will always be considering functions living on $[0, \infty)$ (sometimes known as ``causal'' functions), it is natural to look at functions of the form $f(x)H(x)$ for which Fourier transforms are known. \ One such is $x^n e^{-\alpha x} H(x)$ with $\alpha > 0$ and $n$ a non-negative integer. \ These correspond to functions $(-\ln t)^n t^\alpha \chi_{(0,1)}$ on $(0,1)$, and it turns out that one can handle linear combinations of these and $\delta_{t_j}$'s. \ We content ourselves, however, with two examples, since certain cross terms in transform space require, for their Inverse Fourier Transform, partial fraction decompositions that rapidly become unwieldy.

\begin{example}
Consider the measures  $\nu_1$ and $\nu_2$ given by
$$\nu_1(t) = \frac{196}{169} (-\ln t)t^{\frac{1}{13}} \chi_{(0,1)}(t) \, dt$$
and
$$\nu_1(t) = 256 (-\ln t)^2 t^7 \chi_{(0,1)}(t) \, dt.$$
Then
\begin{eqnarray*}
(\frac{1}{3} \nu_1 + \frac{2}{3} \nu_2)^2 &=&  \frac{19208}{771147} t^{1/13} (-\ln t)^3 \chi_{(0,1)}(t) \, dt +  \frac{131072}{135} t^7 (- \ln t)^5 \chi_{(0,1)}(t) \, dt  \\
&&+ \frac{4239872}{12301875}-\frac{4239872 t^{1/13}}{12301875} + \frac{652288 t^{1/13} (-\ln t)}{820125}\\ &+&\frac{1304576 t^7 (-\ln t)}{820125} +\frac{50176 t^7 (-\ln t)^2}{18225}.  \\
\end{eqnarray*}

\noindent Changing domains and taking the Fourier Transform yields
\begin{eqnarray*}
 \frac{196}{3*169} (-\ln t)t^{1/13} \chi_{(0,1)}(t) + \frac{2}{3}*256 (-\ln t)^2 t^7 \chi_{(0,1)}(t) \,\,\, \, \mbox{\rm on $(0,1)$} &\rightarrow&  \\
 \frac{196}{3*169}x e^{-x/13} H(x) + \frac{2*256}{3}x^2 e^{-7x} H(x) \, \,\,\,  \mbox{\rm on $(0,\infty)$}
                                        &{\rightarrow}& \\
                                        \frac{4}{3} \left(-\frac{49}{(i-26 \pi  s)^2}+\frac{256 i}{(-7 i+2 \pi  s)^3}\right) \hspace*{.2in} \mbox{\rm (in transform space),}
\end{eqnarray*}
which, upon squaring, gives
\begin{equation}  \label{eq:FTspace}
\frac{38416}{9 (i-26 \pi  s)^4}-\frac{1048576}{9 (-7 i+2 \pi  s)^6}-\frac{401408 i}{9 (i-26 \pi  s)^2 (-7 i+2 \pi  s)^3}.
\end{equation}
Taking the Inverse Fourier Transform yields
\begin{eqnarray}  \label{eq:origspace}
&&\left[\frac{19208 e^{-x/13} x^3}{771147} H(x)\right] + \nonumber  \\
&& + \left[\frac{131072}{135} e^{-7 x} x^5 H(x)\right] +   \\
&& + \left[\left(\frac{4239872 e^{-7 x}}{12301875}-\frac{4239872 e^{-x/13}}{12301875}+\frac{1304576 e^{-7 x} x }{820125}\right)H(x)\right. +  \nonumber\\
&&\left.\left(\frac{652288 e^{-x/13} x }{820125}+\frac{50176 e^{-7 x} x^2 }{18225}\right) H(x)\right],  \nonumber  \\
 \nonumber \end{eqnarray}
where terms in (\ref{eq:FTspace}) correspond with terms in square brackets in (\ref{eq:origspace}). \ (As noted, the cross term in the third term of (\ref{eq:FTspace}) spawns terms because of a partial fractions expansion.) \ Transforming back to the interval $(0,1)$ yields the result, which may be checked directly by evaluating moments.
\end{example}

\begin{example}
Consider the measures  $\nu_1$ and $\nu_2$ given by
$$\nu_1(t) =\delta_{e^{-1/5}} \, dt$$
and
$$\nu_1(t) = \frac{8}{3} (-\ln t)^3 t^2 \chi_{(0,1)}(t) \, dt.$$
Then
\begin{eqnarray*}
(\frac{1}{3} \nu_1 + \frac{2}{3} \nu_2)^2 &=&  \frac{1}{9} \delta_{e^{-2/5}} + \frac{64}{2835}(-\ln t)^7 t^2 \chi_{(0,1)}(t) \, dt + \\
                             && + \frac{32}{27} t^2 (-\ln t - 1/5)^3 \chi_{(0,1)}(t) \, dt . \\
\end{eqnarray*}

This is a computation, with the only possible difficulty arising from the Inverse Fourier Transform of $\frac{4 e^{-\frac{2}{5} i \pi  s}}{9 (-i+\pi  s)^4}$, the cross term after changing domains, taking the Fourier Transform, and squaring. \ But with the aide of \textit{Mathematica} \cite{Wol} one finds the Inverse Fourier Transform is just $\frac{32 e^{-2 \left(x-\frac{1}{5 }\right) } \left(x-\frac{1}{5 }\right)^3 }{27} H(x)$, which gives rise to the final term in the statement upon returning to the domain $(0,1)$.
\end{example}

\section{Closing Remarks} \label{section6}

\medskip

We conclude with some remarks and questions. \ First, as might be expected, the Fourier Transform approach as applied to purely atomic measures does not yield anything new;  what results is a slightly disguised version of what has already been done. \ Second, the attentive reader will note that there is no application of Fourier Transforms to yield a new square root (leaving out trivialities such as ``discovering'' the square root of a known square). \ We know of no such example -- the difficulty seems to be that computing the Inverse Fourier Transform of the square root of something in transform space is usually computationally intractable. \ We point out the anomaly that the Laplace transform methods do yield new square roots, but in a way that makes no use of the (known) measure but only of the moments. \ It would be very useful to find an efficacious method of using the known measure to aid in finding the square root measure, but (\ref{eqsqrtA3}) indicates that this may be difficult.

If there is a larger point to be taken away from this investigation, it is perhaps greater awareness that the family of probability measures with compact support is a large collection of measures including members very far from the friendly few which have usually been the objects of study, at least in the context of Berger measures of subnormal weighted shifts.

\medskip

\noindent \textbf{Acknowledgment}

\medskip

This work has been pursued with visits both to Bucknell University and the University of Iowa, and the authors wish to express their gratitude for the warm hospitality from the Mathematics Departments and institutions during their visits.

\medskip

\end{document}